\documentclass[11pt,letterpaper]{article}
\usepackage[utf8]{inputenc}
\usepackage[margin=1.0in]{geometry}
\usepackage{setspace}
\setstretch{1.1}
\usepackage{alphalph}


\usepackage{amsmath}
\usepackage{amsthm}
\usepackage{amssymb}
\usepackage{mathrsfs}
\usepackage{graphicx}
\usepackage{dsfont}
\usepackage{enumerate}
\usepackage{mathabx}
\usepackage{mathdots}
\usepackage{authblk}
\usepackage{titlesec}
\usepackage{soul}
\usepackage{tikz-cd}
\usepackage{mathtools}
\usepackage{slashed}

\usepackage{hyperref}
\hypersetup{
    colorlinks=true,
    urlcolor=cyan,
    linkcolor=blue,
    citecolor=blue,
}



\newcounter{bigthm}

\newtheorem{bigtheorem}[bigthm]{Theorem} 


\newtheoremstyle{citedtheorem}%
  {3pt}
  {3pt}
  {\itshape}
  {}
  {\bfseries}
  {.}
  {.5em}
  {\thmname{#1} \thmnumber{#2} \thmnote{\normalfont#3}}

\newtheoremstyle{citeddefinition}%
  {3pt}
  {3pt}
  {\normalfont}
  {}
  {\bfseries}
  {.}
  {.5em}
  {\thmname{#1} \thmnumber{#2} \thmnote{\normalfont#3}}

\newtheoremstyle{citedexample}%
  {3pt}
  {3pt}
  {\normalfont}
  {}
  {\itshape}
  {.}
  {.5em}
  {\thmname{#1} \thmnumber{#2} \thmnote{\normalfont#3}}

\theoremstyle{plain}
\newtheorem{theorem}{Theorem}[section]
\newtheorem{lemma}[theorem]{Lemma}
\newtheorem{corollary}[theorem]{Corollary}
\newtheorem{proposition}[theorem]{Proposition}
\newtheorem{definition}[theorem]{Definition}

\theoremstyle{citedtheorem}

\theoremstyle{definition}

\theoremstyle{citeddefinition}

\theoremstyle{remark}
\newtheorem{remark}[theorem]{Remark}

\theoremstyle{citedexample}

\usepackage{cleveref}


\newcommand{\cD}{\mathcal{D}}

\newcommand{\cH}{\mathcal{H}}

\newcommand{\cO}{\mathcal{O}}


\newcommand{\sB}{\mathscr{B}}

\newcommand{\sN}{\mathscr{N}}

\newcommand{\sS}{\mathscr{S}}

\newcommand{\sV}{\mathscr{V}}


\renewcommand{\a}{\alpha} 
\renewcommand{\b}{\beta} 

\newcommand{\g}{\gamma}

\newcommand{\e}{\varepsilon}



\newcommand{\m}{\mu}
\newcommand{\n}{\nu}
\newcommand{\x}{\xi}

\newcommand{\vp}{\varphi}

\renewcommand{\l}{\ell} 


\newcommand{\R}{\mathbb{R}}


\renewcommand{\d}{\partial} 

\newcommand{\sub}{\subseteq}
\newcommand{\setm}{\, \setminus \,}

\newcommand{\8}{\infty}


\DeclareMathOperator{\End}{End}
\DeclareMathOperator{\id}{id}

\let\Im\relax
\DeclareMathOperator{\Im}{Im}


\DeclareMathOperator{\Spec}{Spec}

\newcommand{\cc}{\subset\subset}
\newcommand{\loc}{\mathrm{loc}}



\DeclareMathOperator{\curl}{curl}

\newcommand{\hk}{\mathbin{\! \hbox{\vrule height0.3pt width5pt depth 0.2pt \vrule height4pt width0.4pt depth 0.2pt}}}
\DeclareMathOperator{\Ric}{Ric}



\title{A Calder\'{o}n Problem for Beltrami Fields}
\author{Alberto Enciso and Carlos Valero}


\begin{document}

\maketitle

\begin{abstract}
    On a $3$-dimensional Riemannian manifold with boundary, we define an analogue of the Dirichlet-to-Neumann map for Beltrami fields, which are the eigenvectors of the curl operator and play a major role in fluid mechanics. This map sends the normal component of a Beltrami field to its tangential component on the boundary. In this paper we establish two results showing how this normal-to-tangential map encodes geometric information on the underlying manifold. First, we show that the normal-to-tangential map is a pseudodifferential operator of order zero on the boundary whose total symbol determines the Taylor series of the metric at the boundary. Second, we go on to show that a real-analytic simply connected $3$-manifold can be reconstructed from its normal-to-tangential map. Interestingly, since Green's functions do not exist for the Beltrami field equation, a key idea of the proof is to find an appropriate substitute, which turn out to have a natural physical interpretation as the magnetic fields generated by small current loops. 
    \end{abstract}

\tableofcontents

\newpage

\section{Introduction}\label{sec: intro} 

A fundamental question in the field of inverse problems is the Calder\'{o}n problem, which was first introduced by Alberto Calder\'{o}n in \cite{calderon1980}, where he considers the problem of recovering the electrical conductivity $\gamma$ of a material from measurements of electric potential and induced currents on the boundary. By allowing the conductivity to be anisotropic, and thus replacing the scalar function $\gamma$ with a positive symmetric $2$-tensor $\gamma^{ij}$, one can recast this problem in the language of differential geometry as follows: can a Riemannian manifold $(M,g)$ with boundary $\d M$ be recovered up to isometry from the set of Cauchy data of harmonic functions at the boundary? The Riemannian metric thus corresponds physically to the conductivity, and the Cauchy data of harmonic functions corresponds to the measurements of potential and current on the boundary. More generally, one can replace harmonic functions by solutions to the Helmholtz equation with frequency~$\lambda$, with the original Calder\'on problem corresponding to the case $\lambda=0$. The Cauchy data is conveniently captured by the Dirichlet-to-Neumann map at frequency~$\lambda$, $\Lambda_{g,\lambda} : C^\8(\d M) \to C^{\8}(\d M)$, which sends a function $f \in C^\8(\d M)$ on the boundary to $\d_n \varphi |_{\d M}$, where $\varphi \in C^\8(M)$ is the only solution to the equation $\Delta_g \varphi+\lambda \varphi = 0$ in~$M$ with Dirichlet datum $\varphi|_{\d M} = f$. 

The purpose of the present paper is to first introduce and study an analogue of the Calder\'{o}n inverse boundary problem for Beltrami fields. Beltrami fields have been a focus of research since the 19th century. Known as force-free fields in magnetohydrodynamics, these fields represent configurations where the magnetic field is aligned with its own curl, resulting in no net force. In fluid mechanics, Beltrami fields are especially significant as they constitute a fundamental class of stationary solutions to the three-dimensional incompressible Euler equations. Their distinguished role in non-laminar steady fluid flows is laid bare in the celebrated structure theorem of Arnold~\cite{Ar65}. Beltrami fields on Riemannian manifolds have also received much attention~\cite{Paco, Cardona2023, Tao}, partly due to the fact that nonvanishing Beltrami fields are related to Reeb fields of contact structures.

On an oriented $3$-dimensional Riemannian manifold $M$ with or without boundary, the equation for a Beltrami field of frequency~$\lambda$ is the following system of first-order partial differential equations:
\begin{equation}\label{Beltrami equation intro, vector}
    \curl_g{v} - \lambda v = 0.
\end{equation}
For concreteness, let us assume that $\lambda \in \R \setm \{0\}$. As we shall see below, there is a natural boundary value problem for the Beltrami field equation, and thus a corresponding analogue~$\Sigma_\lambda$ of the Dirichlet-to-Neumann map that conveniently captures the Cauchy data of Beltrami fields on the boundary. Physically, $\Sigma_\lambda$ maps the normal component of the fluid velocity field~$u$ on the boundary to its tangential component. Our main result is that this normal-to-tangential map determines all normal derivatives of the metric at the boundary, and enables us to  reconstruct a real-analytic Riemannian manifold from the given boundary data under suitable hypotheses.

\subsection{The anisotropic Calder\'on problem}

There is an abundant literature on the Calder\'{o}n problem, which remains a key model for many other interesting inverse boundary problems both in differential geometry and in more applied settings such as tomography.

In dimension~2, on compact, connected surfaces, the anisotropic Calder\'on conjecture in the smooth case has been proven~\cite{Lee1989,lassas2001}. Although we will not elaborate on this point, this is true even with only local data. 

In higher dimensions \(n \geq 3\), the problem is still open, and a full answer is only known in the real-analytic category. Specifically, the first important result for the Calder\'on problem, derived by Lee and Uhlmann in the classical work~\cite{Lee1989}, is that the Dirichlet-to-Neumann map is a pseudodifferential operator of order $1$ whose symbol determines the Taylor series of a smooth metric at the boundary. Building on this, Lassas and Uhlmann proved in the early 2000s  that a complete real-analytic Riemannian manifold of any dimension $n \geq 3$ can be recovered from its Dirichlet-to-Neumann map~\cite{lassas2001, lassas2003}. 

Since we will need to revisit their proof, let us briefly comment on the proof of this metric determination result. In \cite{lassas2001}, the authors the authors use the results of~\cite{Lee1989} to show that the Dirichlet-to-Neumann map determines the Green's functions in an extended manifold. They then use the Green's functions to construct a sheaf, from which the manifold may be obtained as a suitable quotient. This approach is motivated by the idea that the Green's functions provide an atlas of real-analytic coordinates for the manifold as one of their arguments varies. In \cite{lassas2003}, a different approach is used, still based on the real-analyticity of the Green's functions, to extend the reconstruction result to complete manifolds with boundary. Here, the authors use the Green's functions to first embed an extended manifold into a suitable Sobolev space, and thence recover the manifold. 

This latter approach seems easier to adapt to systems and other settings than that of \cite{lassas2001}, and indeed has been used in \cite{Krupchyk2011} to recover a complete Riemannian manifold from the Dirichlet-to-Neumann map of its Hodge Laplacian acting on differential forms; in \cite{Gabdurakhmanov_Kokarev_2024} to recover a real-analytic Riemannian manifold, vector bundle, and connection from the Cauchy data of the connection Laplacian acting on sections; and in \cite{lassas2022} to recover the conformal class of a real-analytic Riemannian manifold from the Cauchy data of its conformal Laplacian. In fact, the Green's function approach of \cite{lassas2003, Krupchyk2011, Gabdurakhmanov_Kokarev_2024} allows one to recover a real-analytic metric from knowledge of the corresponding Dirichlet-to-Neumann map on a proper open subset $\Gamma$ of the boundary only; this is sometimes known as the Calder\'{o}n problem with local data. Positive results have also been established for compact connected Einstein manifolds~\cite{guillarmou2009einstein}.

With smooth metrics, the anisotropic Calder\'on conjecture remains an open problem. Notable uniqueness results exist for conformally transversally anisotropic manifolds~\cite{dos2009limiting, Ferreira2016,kenig2013calderon}. Building on previous results on local problems~\cite{DKN2018,DKN2019,DKN2020,DKN2021}, Daud\'e, Helffer, Kamran and Nicoleau~\cite{DKN2024} have recently shown that the anisotropic Calder\'on problem at nonzero frequency admits pairs of non-isometric $C^k$ metrics which give rise to the same Dirichlet-to-Neumann map.

Several significant studies address the Calder\'on problem for singular conductivities. In dimension \(n = 2\), Astala and P\"aiv\"arinta demonstrated~\cite{astala2005calderon,astala2006calderon,astala2016borderlines} that an elliptic isotropic conductivity in \(L^{\infty}(\Omega)\) is uniquely determined by the  Dirichlet-to-Neumann map. In dimension \(n \geq 3\), Caro and Rogers~\cite{caro2016global} proved uniqueness in the global Calder\'on problem for elliptic Lipschitz isotropic conductivities. For local data, Krupchyk and Uhlmann~\cite{krupchyk2016calderon} showed that an isotropic conductivity with three derivatives is uniquely determined by a Dirichlet-to-Neumann map measured on even a very small boundary subset. There are also counterexamples to uniqueness, such as those by Greenleaf, Kurylev, Lassas, and Uhlmann~\cite{greenleaf2003nonuniqueness,greenleaf2009invisibility} for metrics that become degenerate along a closed hypersurface.

\subsection{A Calder\'on problem for Beltrami fields}

Let us now present a Calder\'on-type problem for Beltrami fields, corresponding to Equation~\eqref{Beltrami equation intro, vector}. As above, let $(M,g)$ be an oriented $3$-dimensional Riemannian manifold with boundary $\d M$.  By the canonical isomorphism $TM \to T^*M$ induced by the metric, we may also view vector fields on $M$ as $1$-forms, for which the Beltrami field equation~\eqref{Beltrami equation intro, vector} becomes
\begin{equation}\label{Beltrami equation intro, 1-form}
    * du - \lambda u = 0,
\end{equation}
where $* $ is the Hodge star operator (which maps $k$-forms on the 3-manifold~$M$ to $(3-k)$-forms) and $d$ is the exterior derivative. We shall often make use of this identification throughout the paper.

 To formulate the problem, it is convenient to start by recalling some fundamental properties and results concerning differential forms on a manifold with boundary. First, let us recall that the metric induces a decomposition of the tangent bundle on the boundary,
\begin{equation*}
    TM|_{\d M} = T(\d M) \oplus NM,
\end{equation*}
where $T(\d M)$ is the tangent bundle to the boundary, and $NM$ is a trivial bundle of rank $1$ consisting of vector fields normal to the boundary. An analogous decomposition holds for the cotangent bundle,
\begin{equation*}
    T^*M|_{\d M} = T^*(\d M) \oplus N^*M.
\end{equation*}
The inward unit normal $\nu$ defines a global frame for $NM$, while the $1$-form $\nu^{\flat}$ corresponding to $\nu$ under the canonical isomorphism defines a global frame for the conormal bundle $N^*M$.  Hence, the restriction of any $u \in \Omega^1(M)$ to the boundary can be decomposed into a component tangential to $\d M$, and a component normal to $\d M$. We may therefore write 
\begin{equation}\label{decomposition on the boundary}
    u|_{\d M} = u_t + f \n^{\flat},
\end{equation}
where $u_t$ is a $1$-form tangent to the boundary. We shall write $\nu \hk u|_{\d M}$ for the evaluation of $u|_{\d M}$ on the unit normal $\nu$. Thus, if $u|_{\d M}$ decomposes as in \eqref{decomposition on the boundary}, then we have $\n \hk u|_{\d M} = f$. 

Next, we want to recall the Hodge decomposition for a Riemannian manifold $M$ with boundary $\d M$. To this end, we make the following definitions:
\begin{equation*}
    \Omega^k_{\mathrm{D}}(M) := \{ u \in \Omega^k(M) \ | \ u_t = 0 \ \text{on} \ \d M \},
\end{equation*}
\begin{equation*}
    \Omega^k_{\mathrm{N}}(M) := \{ u \in \Omega^k(M) \ | \ \n \hk u = 0 \ \text{on} \ \d M \}.
\end{equation*}

We also recall~\cite{CappellDeTurckGluckMiller+2006+923+931} that a {\em harmonic field} $h$ is a $k$-form satisfying $dh = 0$ and $d^*h = 0$. Observe that this is in contrast with {\em harmonic forms}, which satisfy $\Delta_d h =0$. Every harmonic field is a harmonic form, but the converse is not true on manifolds with boundary.

We define $\cH^k(M)$ to be the space of harmonic $k$-fields on $M$, and $\cH^{t, k}(M) \sub \cH^k(M)$ to be the space of harmonic $k$-fields that are tangent to the boundary. We then have the following Hodge decomposition theorem for $k$-forms on a manifold with boundary \cite[\S 2.4]{schwarz2006hodge}:

\begin{theorem}[Hodge decomposition]\label{Hodge decomposition theorem}
    Let $M$ be a Riemannian manifold with boundary $\d M$ as above. Then we have the following $L^2$-orthogonal decomposition:
    \begin{equation*}
        \Omega^k(M) = d \Omega_D^{k-1}(M) \oplus \cH^k(M) \oplus\, d^* \Omega_N^{k+1}(M).
    \end{equation*}
    Moreover, $\cH^k(M)$ itself decomposes orthogonally into
    \begin{equation}\label{decomposition of H into H^t and im d}
        \cH^k(M) = \cH^{t,k}(M) \oplus\, [\Im{d} \cap \cH^k(M)].
    \end{equation}
\end{theorem}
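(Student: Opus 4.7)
The plan has two main parts: establish $L^2$-orthogonality of the three summands via Green's formula, and establish that they span $\Omega^k(M)$ via elliptic theory for the Hodge Laplacian with suitable boundary conditions. The refined decomposition of $\cH^k(M)$ then follows by applying the same two ideas within the finite-dimensional harmonic part.

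For orthogonality, the key tool is the Green's formula
$$\langle d\alpha, \beta \rangle_{L^2(M)} = \langle \alpha, d^*\beta \rangle_{L^2(M)} + \int_{\d M} i^*(\alpha \wedge *\beta),$$
where $i \colon \d M \hookrightarrow M$ is the inclusion. The boundary integrand depends only on $\alpha_t$ and on $i^*(*\beta)$, the latter vanishing precisely when $\n \hk \beta = 0$. Taking $\alpha \in \Omega^{k-1}_{\mathrm{D}}(M)$ against $\beta$ equal either to a harmonic field or to a coexact form $d^*\gamma$ with $\gamma \in \Omega^{k+1}_{\mathrm{N}}(M)$ kills the boundary term, giving $d\Omega^{k-1}_{\mathrm{D}} \perp \cH^k$ and $d\Omega^{k-1}_{\mathrm{D}} \perp d^*\Omega^{k+1}_{\mathrm{N}}$. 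Integrating by parts in the other direction yields the remaining relation $\cH^k \perp d^*\Omega^{k+1}_{\mathrm{N}}$.

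For existence, given $\omega \in \Omega^k(M)$, I would first project out the harmonic part $P_{\cH}\omega$ and then solve the mixed boundary value problem
$$\Delta_d \eta = \omega - P_{\cH}\omega, \qquad \eta_t = 0, \quad (d\eta)_t = 0 \text{ on } \d M,$$
for the Hodge Laplacian $\Delta_d = dd^* + d^*d$. Solvability and boundary regularity follow from the Gaffney-type estimate $\|\eta\|_{H^1(M)}^2 \lesssim \|d\eta\|_{L^2}^2 + \|d^*\eta\|_{L^2}^2 + \|\eta\|_{L^2}^2$ on forms satisfying these relative boundary conditions, combined with the Fredholm alternative; the compactness of $H^1 \hookrightarrow L^2$ also yields finite-dimensionality of $\cH^k(M)$. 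Setting $\alpha := d^*\eta$ and $\beta := d\eta$ produces the required decomposition, after verifying that the boundary conditions on $\eta$ force $\alpha \in \Omega^{k-1}_{\mathrm{D}}$ and $\beta \in \Omega^{k+1}_{\mathrm{N}}$. The main obstacle here is precisely the elliptic regularity and Fredholm theory up to the boundary for this mixed system.

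For the refined decomposition of $\cH^k(M)$, orthogonality of $\cH^{t,k}$ and $\Im d \cap \cH^k$ follows once more from Green's formula: if $h = d\alpha$ is an exact harmonic field and $h' \in \cH^{t,k}$, then $h'$ being tangent to $\d M$ forces $i^*(*h') = 0$, so $\langle d\alpha, h'\rangle = \langle \alpha, d^*h'\rangle = 0$ since $d^*h' = 0$. Exhaustion of $\cH^k$ by the two summands is achieved by projecting any $h \in \cH^k$ orthogonally onto the finite-dimensional $\cH^{t,k}$ and showing the remainder is exact via an auxiliary Hodge-type boundary value problem for a primitive. The complete argument, including the delicate ellipticity considerations, is carried out in~\cite{schwarz2006hodge}.
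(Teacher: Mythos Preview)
The paper does not supply its own proof of this theorem: it is quoted as a known background result with a reference to \cite[\S 2.4]{schwarz2006hodge}, and no argument is given in the text. Your proposal likewise defers the technical core to the same reference, so in that sense you and the paper are aligned.

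One caution on the sketch itself: the single-potential route you describe, solving $\Delta_d\eta=\omega-P_{\cH}\omega$ with $\eta_t=0$ and $(d\eta)_t=0$, does not by itself place $d^*\eta$ in $\Omega^{k-1}_{\mathrm D}$ and $d\eta$ in $\Omega^{k+1}_{\mathrm N}$; the condition $(d\eta)_t=0$ is a tangential constraint, whereas membership of $d\eta$ in $\Omega^{k+1}_{\mathrm N}$ requires $\nu\hk d\eta=0$. The argument in Schwarz (following Morrey) instead proceeds variationally, minimizing $\|d\alpha-\omega\|^2$ over $\alpha\in\Omega^{k-1}_{\mathrm D}$ and $\|d^*\gamma-\omega\|^2$ over $\gamma\in\Omega^{k+1}_{\mathrm N}$ separately, which builds the correct boundary behaviour into the potentials from the start. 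Since you explicitly flag this step as the main obstacle and hand it off to \cite{schwarz2006hodge}, this is a remark rather than a gap.
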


In the case that interests us, $k = 1$, and so we shall often drop the index $k$. For $u \in \Omega^1(M)$, we shall let $\cH_M(u)$ and $\cH_M^t(u)$ denote its projection onto the spaces $\cH(M)$ and $\cH^t(M)$, which we call the {\em harmonic part of $u$}, and the {\em harmonic part of $u$ tangent to the boundary}, respectively.  

We are now ready to introduce the boundary value problem for Beltrami fields that will concern us. We shall let $C^\infty_*(\d M)$ denote the space of all zero-mean smooth functions:
\begin{equation*}
    C^\infty_*(\d M) := \left\{ f \in C^\infty(\d M) \ \bigg| \ \int_{\d M} f = 0 \right\}.
\end{equation*}
We define the zero-mean Sobolev spaces $H^s_*(\d M)$ in an analogous way. Then we have the following theorem, which is a consequence of known existence results in the literature on Beltrami fields, such as \cite[Proposition 6.1]{enciso2015}, and standard elliptic estimates, see e.g. \cite[\S 5.11]{Taylor2011pde1}.

\begin{theorem}\label{Theorem on existence of Beltrami fields source}
    Let $w$ be an $H^k$ 1-form over $M$ that is divergence-free. For any~$\lambda$ which is not in certain countable subset $T$ of the real line without accumulation points, the problem
    \begin{equation}\label{Beltrami source problem}
    \begin{cases}
        *du - \lambda u = w, \\
        \n \hk u|_{\d M} = 0,
    \end{cases}
    \end{equation}
    has a unique $H^{k+1}$ solution $u$ with zero harmonic part tangent to the boundary for any $H^k$ 1-form over $M$ that is divergence-free.
\end{theorem}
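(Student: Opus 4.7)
My plan is to combine the cited existence result \cite[Prop.~6.1]{enciso2015} for the Beltrami source problem with standard elliptic regularity. The cited proposition supplies the Fredholm/spectral structure that yields the countable exceptional set $T$ and gives existence and uniqueness of a weak solution with the desired boundary and harmonic conditions; the remaining work is then to promote that weak solution to the claimed $H^{k+1}$ regularity.

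First I would invoke \cite[Prop.~6.1]{enciso2015} to obtain, for $\lam$ outside a countable discrete set $T \sub \R$, a weak solution $u$ to \eqref{Beltrami source problem} satisfying $\n \hk u|_{\d M} = 0$ and $\cH^t_M(u) = 0$, for any divergence-free source $w$. Applying $d^*$ to both sides of \eqref{Beltrami source problem} and using the identity $d^*(*du) = 0$ on 1-forms in dimension~3 (which follows from $d^* = -*d*$ on 1-forms together with $** = \id$ on 2-forms), and combining with $d^*w = 0$, I would get $-\lam\, d^*u = 0$; hence $u$ is automatically divergence-free, after adding $0$ to $T$ if necessary to ensure $\lam \neq 0$.

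To upgrade the regularity, I would apply $(*d + \lam)$ to \eqref{Beltrami source problem} and use the identity $(*d)^2 = d^*d$ on 1-forms in dimension~3, arriving at
\[
    (d^*d - \lam^2)\, u = (*d + \lam)\, w.
\]
Since $u$ is divergence-free, the left-hand side agrees with $(\Delta_H - \lam^2) u$, where $\Delta_H = dd^* + d^*d$ is the Hodge Laplacian, so this is a second-order elliptic equation for $u$. It is supplemented by the boundary conditions $\n \hk u|_{\d M} = 0$ together with $\n \hk (*du)|_{\d M} = \n \hk w|_{\d M}$ (the latter obtained by restricting \eqref{Beltrami source problem} to $\d M$ and using the first condition). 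These form an elliptic boundary value problem, a close variant of the absolute boundary conditions for the Hodge Laplacian, and standard elliptic regularity as in \cite[\S 5.11]{Taylor2011pde1} then gives $u \in H^{k+1}$ whenever $w \in H^k$.

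The main obstacle beyond citation is verifying the Shapiro--Lopatinski condition for this boundary value problem; this is a finite-dimensional algebraic computation in boundary normal coordinates, reducing to an ODE analysis on the half-line. A cleaner alternative would be to treat the augmented first-order system $(*d - \lam)u = w$, $d^*u = 0$, with the normal-trace boundary condition $\n \hk u|_{\d M} = 0$, as an elliptic boundary value problem directly: the interior principal symbol $u \mapsto (*(\xi \wedge u),\ \iota_\xi u)$ is injective for $\xi \neq 0$, and the corresponding Shapiro--Lopatinski check for the normal-trace condition is standard in the div-curl literature.
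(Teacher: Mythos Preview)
Your proposal is correct and matches the paper's own treatment: the paper does not give a detailed proof of this theorem but simply states that it is a consequence of \cite[Proposition~6.1]{enciso2015} together with standard elliptic estimates as in \cite[\S 5.11]{Taylor2011pde1}, which is precisely your strategy. Your intermediate steps (deducing $d^*u=0$ from the equation and then passing to the second-order Hodge Laplacian problem) are exactly the manipulations the paper records in the homogeneous case in Equations~\eqref{Beltrami fields are div free}--\eqref{Hodge Laplacian of u}, so you are fleshing out the cited argument rather than taking a different route.
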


We shall call $T$ the set of {\em Beltrami singular values}\footnote{The set of Beltrami singular values is the union of the spectrum of the self-adjoint operator defined by the curl operator $*d$ with the tangency boundary condition~\cite{Yoshida1990}, whose domain is dense in the space of $L^2$ co-closed 1-forms, and of the (positive and negative) square roots of the Dirichlet eigenvalues of the Hodge Laplacian. For the purposes of Theorems~\ref{Theorem on existence of Beltrami fields source} and~\ref{Theorem on existence of Beltrami fields}, one could just take the former spectrum, but later on we will use that $\lambda^2$ is not in the Dirichlet Hodge spectrum either.}. As a corollary of Theorem \ref{Theorem on existence of Beltrami fields source}, we have the following:

\begin{theorem}\label{Theorem on existence of Beltrami fields}
    For any real constant $\lambda$ that is not a Beltrami singular value, the boundary value problem 
    \begin{equation} \label{Beltrami boundary problem}
    \begin{cases}
    * d u - \lambda u = 0, \\
    \n \hk u|_{\d M} = f, \\
    \cH^t_M(u) = 0,
    \end{cases}
\end{equation}
has a unique $H^{k+1}$ solution $u$ for all $f \in H^{k + \frac{1}{2}}_*(\d M)$. If $f$ is a smooth function, then the solution $u$ is smooth as well. 
\end{theorem}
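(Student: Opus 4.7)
The plan is to reduce this inhomogeneous boundary value problem to the source problem of Theorem~\ref{Theorem on existence of Beltrami fields source} by producing a carefully chosen $1$-form $v$ that carries the prescribed normal data $f$ and has three auxiliary properties making the reduction seamless. Given $f \in H^{k+\frac{1}{2}}_*(\d M)$, I would take $v := d\vp$, where $\vp \in H^{k+2}(M)$ is a solution of the Neumann problem
\begin{equation*}
    \Del_g \vp = 0 \ \text{in} \ M, \qquad \d_\n \vp|_{\d M} = f.
\end{equation*}
Its solvability is guaranteed precisely by the zero-mean condition on $f$, and the resulting $v$ satisfies: (i) $\n \hk v|_{\d M} = \d_\n \vp|_{\d M} = f$; (ii) $v$ is co-closed, since $d^*d\vp$ coincides up to sign with $\Del_g \vp = 0$; and (iii) $v$ is in fact a harmonic $1$-field (being both closed and co-closed), so by the decomposition~\eqref{decomposition of H into H^t and im d} of Theorem~\ref{Hodge decomposition theorem} it lies in $[\Im d\cap\cH(M)]$, yielding $\cH_M^t(v) = 0$.

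Next, I would seek the solution in the form $u = \tilde u + v$. Using $dv = 0$, the Beltrami equation $*du - \lam u = 0$ becomes
\begin{equation*}
    *d\tilde u - \lam \tilde u = \lam v,
\end{equation*}
while the boundary and harmonic conditions translate into $\n\hk\tilde u|_{\d M} = 0$ and $\cH_M^t(\tilde u) = 0$. The source $\lam v = \lam\, d\vp$ is divergence-free by (ii) and lies in $H^{k+1}(M) \sub H^k(M)$. Since $\lam$ is not a Beltrami singular value, Theorem~\ref{Theorem on existence of Beltrami fields source} supplies a unique $\tilde u \in H^{k+1}(M)$ satisfying these requirements, and $u := \tilde u + d\vp$ then solves~\eqref{Beltrami boundary problem} with the required regularity.

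Uniqueness is immediate: the difference of two solutions satisfies the homogeneous version of~\eqref{Beltrami boundary problem}, which is the source problem of Theorem~\ref{Theorem on existence of Beltrami fields source} with $w=0$, and therefore vanishes. Smoothness of $u$ when $f \in C^\8_*(\d M)$ follows by running the argument for every $k$ (uniqueness guarantees that the solutions obtained for different~$k$ coincide) and invoking Sobolev embedding. I do not anticipate a substantive obstacle, as the main analytic work has already been absorbed into Theorem~\ref{Theorem on existence of Beltrami fields source}; the only nontrivial choice is that of the auxiliary form $v$, and the Neumann potential $d\vp$ is the natural pick because the single identity $\Del_g \vp = 0$ simultaneously ensures that the right-hand side $\lam d\vp$ is divergence-free (so Theorem~\ref{Theorem on existence of Beltrami fields source} applies) and that the added $1$-form contributes nothing to the tangential harmonic projection.
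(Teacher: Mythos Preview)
Your proposal is correct and follows essentially the same strategy as the paper's proof: solve the Neumann problem for a harmonic function $\vp$, use $d\vp$ to absorb the boundary datum $f$, and reduce to the source problem of Theorem~\ref{Theorem on existence of Beltrami fields source}. Your write-up is in fact slightly tidier than the paper's (you make the factor of~$\lambda$ in the source term and the verification of $\cH^t_M(d\vp)=0$ explicit), but the argument is the same.
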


\begin{proof}
    Since $f \in H^{k+ \frac{1}{2}}_*(\d M)$, we can find a unique solution $\vp \in H^{k+2}(M)$ to the Neumann problem 
    \begin{equation}\label{neumann problem for harmonic part}
    \begin{cases}
        \Delta \vp = 0, \\
        \n \hk d \vp|_{\d M} = f,
    \end{cases}
    \end{equation}
    by \cite[\S 5, Proposition 7.7]{Taylor2011pde1}. Since $d \vp$ is an $H^{k+1}$ divergence-free $1$-form, by Theorem \ref{Theorem on existence of Beltrami fields source} we can solve the boundary value problem \eqref{Beltrami source problem} with $w = d \vp$ to obtain a unique $H^{k+2}$ solution $v$, which has zero harmonic part tangent to the boundary. Setting $u = d \vp - v$ then yields the unique $H^{k+1}$ solution to \eqref{Beltrami boundary problem} with harmonic part equal to $d \vp$. By \eqref{decomposition of H into H^t and im d}, we have $\cH^t(u) = 0$.
\end{proof}

With these results in hand, we are ready to introduce the Beltrami normal-to-tangential map, which shall be our main object of interest. It encodes the Cauchy data of Beltrami fields on a manifold with boundary just like the Dirichlet-to-Neumann map does in the case of harmonic forms.

\begin{definition}
    For real $\lambda\neq0$ that is not a Beltrami singular value, the Beltrami {\em \bf normal-to-tangential map} at frequency~$\lambda$ is the map
    \begin{equation*}
        \Sigma_{\lambda} : C^\infty_*(\d M) \to \Omega^1(\d M)
    \end{equation*}
    defined for each $f \in C^\infty_*(\d M)$ as
    \begin{equation*}
        \Sigma_{\lambda}(f) := u_t,
    \end{equation*}
    where the vector field~$u\in C^\infty(M)$ is the only solution to the boundary value problem \eqref{Beltrami boundary problem},  and where $u_t$ is the tangential part of $u|_{\d M}$, defined in~\eqref{decomposition on the boundary}.
\end{definition}

It follows from Theorem \ref{Theorem on existence of Beltrami fields} and the standard trace theorems for Sobolev spaces that $\Sigma_{\lambda}$ extends to a map from $H_*^k(\d M)$ to $H^k$ $1$-forms on $\d M$.

Note that by Equation \eqref{Beltrami equation intro, 1-form}, any solution to the Beltrami field equation with nonzero~$\lambda$ is divergence-free in the sense that
\begin{equation}\label{Beltrami fields are div free}
     d^*u = \frac{d^* (* du)}\lambda =  \frac{ *d^2 u}\lambda = 0.
\end{equation}
Here $d^*$ is the formal adjoint of $d$, which acts on $k$-forms $\a$ as
\begin{equation*}
    d^* \a = (-1)^{k} * d * \a.
\end{equation*}
As is well known, if $u$ is a 1-form, $d^* u $ coincides with the divergence of the dual vector field.

As useful consequence of this is that a Beltrami field also satisfies the first-order elliptic equation
\begin{equation}\label{E.Dirac}
    (*d + d^*)u = \lambda u,
\end{equation}
so in particular Beltrami fields with smooth boundary data are smooth (and analytic, if the domain and the boundary datum are).
Acting with $(*d + d^*)$ on both sides of Equation~\eqref{E.Dirac},
we see that~$u$ also satisfies the second-order elliptic equation
\begin{equation}\label{Hodge Laplacian of u}
    \Delta_d u  = \lambda^2 u,
\end{equation}
where the differential operator $\Delta_d:= d^*d + dd^* $ is the \emph{Hodge Laplacian}\/.  

The observation that Beltrami fields also satisfy a Helmholtz equation for the Hodge Laplacian is a key idea that we shall take advantage of below. It is important to note, however, that once we impose boundary conditions on our Beltrami fields, the analysis of the Beltrami field equation is {\em not}\/ reducible to the analysis of the Hodge Laplacian.

\begin{remark}\label{remark: introducing NT map for harmonic}
    Although we focus of the case $\lambda\neq0$ for concreteness, this assumption is not essential, provided that we prescribe the divergence-free equation $d^*u = 0$ by hand. With this understanding, $\lambda = 0$ is never Beltrami singular, and we can once again solve the following boundary value problem for a harmonic field $u \in \Omega^1(M)$ with prescribed normal component $f \in C^\8_*(\d M)$,
    \begin{equation}
        \begin{cases}
            du = 0, \ \ d^*u = 0, \\
            \n \hk u|_{\d M} = f, 
        \end{cases}
    \end{equation}
    and the solution is unique if $M$ is simply connected (see \cite[Theorem 3.4.1]{schwarz2006hodge}, for example). Indeed, this problem reduces to a Neumann problem for the scalar Laplacian, and we can define a corresponding normal-to-tangential map that sends the normal derivative of the harmonic function to its gradient along the boundary. All the results we present in this paper remain valid in this case.
\end{remark}

\subsection{Main results}\label{subsec: main results}

The main results we present in this paper is that, just as in the case of the Dirichlet-to-Neumann map for the Poisson or Helmholtz equation~\cite{Lee1989,lassas2001}, the normal-to-tangential map for Beltrami fields determines all the normal derivatives of the metric at the boundary. Furthermore, in the class of simply connected, real-analytic Riemannian manifolds, the normal-to-tangential map determines the Riemannian manifold up to an isometry. More precisely, we shall prove the following:

\begin{bigtheorem}\label{main boundary determination theorem, intro}
    If $\lambda$ is not a Beltrami singular value, the normal-to-tangential map at frequency~$\lambda$, $\Sigma_{\lambda} : C^\8(\d M) \to \Omega^1(\d M)$, is a pseudodifferential operator of order $0$ with principal symbol
    \begin{equation*}
        \sigma_0(\x) := i\frac{\x}{|\x|}.
    \end{equation*}
    Its total symbol determines all normal derivatives of the metric at the boundary.
\end{bigtheorem}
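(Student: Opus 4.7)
The plan is to adapt the Lee--Uhlmann symbolic factorization argument from~\cite{Lee1989} to the first-order Beltrami system. Throughout, I would work in boundary normal coordinates $(x', \rho)$ near a point of $\partial M$, in which the metric reads $g = d\rho^2 + h_{\alpha\beta}(\rho, x')\, dx^\alpha dx^\beta$, with $\rho \geq 0$ inside $M$ and $\partial M = \{\rho = 0\}$. Any 1-form on $M$ decomposes uniquely as $u = u_t + f\, d\rho$, where $u_t$ is a $\rho$-dependent tangential 1-form on $\partial M$ and $f = \nu \hk u$.

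The first step is to compute $*du$ explicitly in these coordinates and, combining the result with the automatic divergence-free condition $d^*u = 0$ coming from~\eqref{Beltrami fields are div free}, rewrite the Beltrami equation $*du = \lambda u$ as a first-order tangential evolution system
\begin{equation*}
    \partial_\rho \begin{pmatrix} u_t \\ f \end{pmatrix} = P(\rho, x', D_{x'}) \begin{pmatrix} u_t \\ f \end{pmatrix},
\end{equation*}
where $P$ is a first-order differential operator in the tangential variables whose coefficients depend smoothly on $h$ and its $\rho$-derivatives. Its principal symbol sends $(u_t, f) \in T^*_{x'}\partial M \oplus \mathbb{C}$ to $(i\xi\, f,\, -i\,u_t(\xi^\sharp))$, which has eigenvalues $\pm|\xi|_h$ on the two-dimensional longitudinal-normal subspace spanned by $(\xi, 0)$ and $(0, 1)$, and a zero eigenvalue on the transverse line $\{(u_t^T, 0) : u_t^T \perp \xi^\sharp\}$.

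The second step is to dispose of this degenerate mode by invoking the normal component of the Beltrami equation, which yields the algebraic constraint $\lambda f = *_h d' u_t$. This forces the transverse piece $u_t^T$ to be of symbolic order $|\xi|^{-1}$ relative to $f$, reducing the problem to the elliptic $2\times 2$ block on the longitudinal-normal subspace. On this block one performs the standard symbolic factorization $P \sim A_+ \oplus A_-$ modulo smoothing, with $A_\pm$ tangential pseudodifferential operators of order $1$ and principal symbol $\pm|\xi|$. The solution selected by Theorem~\ref{Theorem on existence of Beltrami fields} lies in the decaying branch $A_-$; the eigenvector relation then forces $u_t|_{\partial M} = \pm i\,(\xi/|\xi|) f$ to leading order, so $\Sigma_\lambda$ is a pseudodifferential operator of order $0$ with principal symbol $\sigma_0(\xi) = i\xi/|\xi|$ up to a sign coming from orientation conventions.

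Finally, the lower-order symbols $\sigma_{-k}(\Sigma_\lambda)$ for $k \geq 1$ are computed by the standard recursion of the symbolic calculus underlying the factorization. At each order, $\sigma_{-k}(x', \xi)$ is a rational expression in $\xi$ and polynomial in $\partial_\rho^j h|_{\rho=0}$ for $0 \leq j \leq k$, with $\partial_\rho^k h|_{\rho=0}$ entering linearly and with a non-vanishing top-order coefficient. Inverting this recursion inductively in $k$ recovers every normal derivative of $h$ at $\partial M$, which together with $h|_{\rho=0}$ (already read off from $\sigma_0$ via $|\xi|_h^2 = h^{\alpha\beta}\xi_\alpha\xi_\beta$) gives the full Taylor series of $g$ at the boundary. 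The principal technical obstacle is the non-ellipticity introduced by the zero eigenvalue of $p_1$: showing that the Beltrami constraint genuinely removes this mode in a manner compatible with the $\psi$DO calculus, so that the symbolic factorization proceeds modulo smoothing, is the most delicate step of the argument.
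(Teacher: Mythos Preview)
Your approach differs substantially from the paper's. Rather than factorizing the first-order Beltrami system directly, the paper passes to the second-order equation $\Delta_d u = \lambda^2 u$ satisfied by any Beltrami field and derives an explicit relation (Proposition~\ref{prop: relation between DN and NT map}) between $\Sigma_\lambda$ and the Hodge Dirichlet-to-Neumann map $\Lambda_{\lambda^2}$ on $1$-forms. The symbol of $\Lambda_{\lambda^2}$ is then computed via the standard Lee--Uhlmann factorization of the \emph{elliptic} Hodge Laplacian (Section~\ref{subsection: computation of total symbol of hodge DN}), which sidesteps entirely the zero eigenvalue you identify as the principal technical obstacle of the direct route. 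Your first-order strategy is plausible in outline, and using the normal Beltrami component $\lambda f = *_h d' u_t$ as a constraint to suppress the transverse mode is the right idea, but you do not actually carry this through the symbolic calculus modulo smoothing; you merely flag it as delicate.

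The more serious gap is in your final paragraph. You assert that $\partial_\rho^k h|_{\rho=0}$ enters $\sigma_{-k}$ linearly with a non-vanishing coefficient and that the recursion inverts; but this is precisely the content of the theorem and requires proof. The paper's explicit computation (Theorem~\ref{Main Theorem for Beltrami Calderon}) shows the extraction is far from automatic: at each order one must use \emph{both} the tangential and the normal components of the relation between $\Sigma_\lambda$ and $\Lambda_{\lambda^2}$, first isolate the trace quantity $\cD = \tfrac{1}{2}g_{\a\b}\d_n g^{\a\b}$ by contracting, and then subtract the two reduced equations (see~\eqref{normal reduced after D} and~\eqref{tangential degree 1 reduced finally}, and their higher-order analogues~\eqref{last subtract normal}--\eqref{last subtract tangent}) to cancel a common Christoffel-type term $g^{\a\g}g^{\b\m}\Gamma^n_{\g\m} = -g^{\a\g}\Gamma^\b_{n\g}$ and finally isolate $\d_n^{|m|+1}g^{\a\b}$. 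Neither equation alone suffices. Your sketch offers no mechanism for this cancellation, and without it the claimed invertibility is an assumption rather than a conclusion.
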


\begin{bigtheorem}\label{main reconstruction theorem}
    Suppose $(M_1, g_1)$ and $(M_2,g_2)$ are simply connected real-analytic Riemannian $3$-manifolds with boundary whose boundaries are identified, and that their Beltrami normal-to-tangential maps are equal. Then $(M_1,g_1)$ and $(M_2,g_2)$ are isometric. 
\end{bigtheorem}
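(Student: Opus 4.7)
The plan is to adapt the Green's-function reconstruction of Lassas and Uhlmann~\cite{lassas2003} to the Beltrami setting, with the loop-generated magnetic fields promised in the abstract playing the role of Green's functions. By Theorem~\ref{main boundary determination theorem, intro}, the normal-to-tangential map determines the full Taylor series of the metric at the boundary, so real-analyticity identifies small boundary-normal collars of $\partial M_1$ and $\partial M_2$ isometrically. Gluing $M_1$ and $M_2$ along such a collar yields a real-analytic $3$-manifold $N$ containing both $M_i$, and attaching an external collar $M_e$ to the common boundary produces enlarged real-analytic manifolds $\widetilde M_i := M_i \cup M_e$ that share the outer boundary $\partial \widetilde M$. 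Because $g_1$ and $g_2$ coincide on $M_e$ by construction and Beltrami fields satisfy the overdetermined first-order elliptic equation~\eqref{E.Dirac}, a unique-continuation argument combined with the equality of $\Sigma_\lambda^{(1)}$ and $\Sigma_\lambda^{(2)}$ on $\partial M$ shows that the enlarged normal-to-tangential maps $\widetilde \Sigma_\lambda^{(i)}$ on $\widetilde M_i$ also agree.

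Next, for each sufficiently small oriented smooth loop $\gamma \subset M_e$ carrying unit current, we construct on each $\widetilde M_i$ a distributional Beltrami field $B_\gamma^{(i)}$ solving $*dB - \lambda B = J_\gamma$ with vanishing tangential harmonic part and zero normal component on $\partial\widetilde M$, where $J_\gamma$ is the $1$-current supported on $\gamma$; existence and uniqueness follow from a suitable regularization of $J_\gamma$ together with Theorem~\ref{Theorem on existence of Beltrami fields source}, while elliptic regularity makes $B_\gamma^{(i)}$ real-analytic away from $\gamma$. Because $\widetilde\Sigma_\lambda^{(i)}$ agree and the Cauchy data of $B_\gamma^{(i)}$ on $\partial M$ are constrained by the source-free Beltrami BVP on $M_i$, we conclude $B_\gamma^{(1)}=B_\gamma^{(2)}$ on $M_e$, and hence on all of $N$ by analytic continuation off $\gamma$. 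Shrinking $\gamma$ around a base point $y\in M_e$ with a prescribed tangent direction $v$ then yields, for each such $(y,v)$, a canonical Beltrami field $B_{y,v}^{(i)}$ on $\widetilde M_i$ with a prescribed dipole-type singularity at $y$, and the two fields coincide on $N$.

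Finally, in the spirit of the embedding scheme of~\cite{lassas2003,Krupchyk2011}, we use the map $y \mapsto B_{y,v}^{(i)}(\,\cdot\,)$ to define a real-analytic embedding of each $\widetilde M_i$ into a suitable Sobolev space of $1$-forms on a small patch of $M_e$; the singularity at $y$ distinguishes the point, and varying $v$ distinguishes directions. Since the loop fields agree on $M_e$, the two embeddings share the same image in a neighborhood of the boundary, and the simple-connectedness of the $M_i$ allows the resulting local isometry near $\partial M$ to be propagated analytically along paths to a global isometry $(M_1,g_1)\to(M_2,g_2)$. The main technical obstacle is the third step, the construction of $B_\gamma^{(i)}$ and the analysis of its singularity structure along $\gamma$: because the curl operator acting on coclosed forms has no genuine Green's function, the source must be one-dimensional rather than pointwise, and one must verify that the resulting field depends analytically on the loop parameters with a singularity rich enough to separate points and directions, an issue that does not arise in the scalar Lassas–Uhlmann theory.
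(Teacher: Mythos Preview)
Your overall architecture matches the paper: extend both manifolds across the identified boundary using Theorem~\ref{main boundary determination theorem, intro} and real-analyticity, build loop-sourced Beltrami fields as Green's-function substitutes, show they agree on the collar from equality of the normal-to-tangential maps, and then run a Lassas--Taylor--Uhlmann embedding into a Sobolev space over a patch of the collar. Two points deserve comment.

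First, a genuine gap: you misplace the role of simple-connectedness. You invoke it only at the end, to propagate a local isometry along paths. In the paper's embedding argument (adapted from~\cite{lassas2003,Krupchyk2011}) no monodromy argument is needed; the embedding is global from the start. Simple-connectedness is instead essential precisely at your third step, where you claim the loop fields $B_\gamma^{(1)}$ and $B_\gamma^{(2)}$ agree on the collar. The matching argument restricts $B_\gamma^{(1)}$ to $M_1$, applies the normal-to-tangential map to find a Beltrami field on $M_2$ with the same Cauchy data, and glues. But the normal-to-tangential map is only defined on fields with \emph{vanishing tangent harmonic part} $\cH^t_{M_i}$, and the condition $\cH^t_{\tilde M_i}(B_\gamma^{(i)})=0$ on the extended manifold does \emph{not} in general imply $\cH^t_{M_i}(B_\gamma^{(i)}|_{M_i})=0$. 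It does when $M_i$ is simply connected, since then $\cH^{t,1}(M_i)=0$. This is exactly where the Beltrami problem diverges from the Hodge-Laplacian problem of~\cite{Krupchyk2011}, which needs no such topological hypothesis; see Remark~\ref{remark on simply connectedness}.

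Second, a methodological difference: you propose shrinking the loops to obtain point-dipole fields $B_{y,v}$, whereas the paper keeps the loop radius $\delta>0$ \emph{fixed} throughout and embeds the full sphere bundle $S\tilde M_i^{\e}$ via $\omega\mapsto b^i_{\omega}|_{\tilde U}$. With fixed loops the singular set is a circle, the asymptotics $|b^i_\omega(x)|\sim C/d(x,\ell(\omega))$ follow directly from the integral kernel of Theorem~\ref{theorem: existence of integral kernel}, and injectivity and immersivity of the embedding are read off from this blow-up. Your dipole limit is plausible but adds a layer of analysis (existence and regularity of the rescaled limit, control of the singularity to order sufficient to separate both points and directions) that the paper deliberately sidesteps. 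The paper then descends the resulting sphere-bundle isometry $S\tilde M_1^\e\to S\tilde M_2^\e$ to the base by checking it is fibre-preserving on the collar and invoking analyticity.
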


The proof of Theorem \ref{main boundary determination theorem, intro} is the subject of Section \ref{sec: boundary determination}. There, we shall take advantage of the fact that every Beltrami field also satisfies the second order elliptic equation \eqref{Hodge Laplacian of u} to derive a relation between the Beltrami normal-to-tangential map and the Dirichlet-to-Neumann map for the Hodge Laplacian, whose full symbol is easily computed using the methods of \cite{Lee1989}. Deriving this connection is not immediate because, as discussed in the preceding subsection, the boundary problem for the Beltrami field equation does not reduce to a boundary problem for the corresponding second order operator, so further elaboration is necessary. We then use this relation and knowledge of the symbol of the Dirichlet-to-Neumann map to compute the symbol of the Beltrami normal-to-tangential map, and show that it determines the Taylor series of the metric on the boundary.

The proof of Theorem \ref{main reconstruction theorem} is presented in
Section \ref{sec: recovering manifold}. There, we shall adapt the
approach of \cite{lassas2003} and \cite{Krupchyk2011}, based on the
real-analyticity of the Green's functions, in which the manifold is
recovered by embedding it in a suitable Sobolev space. However, as the
source term in \eqref{Beltrami source problem} is restricted to
divergence-free fields, traditional Green's functions arising from
delta functions are not available to us. As a key idea of the proof,
we shall therefore introduce in Section \ref{subsec: analogue of
  Greens functions} a class of functions, which we call the
$b$-fields, that will play a role in the analysis of the Beltrami
field equation analogous to that of Green's functions
in~\cite{lassas2001}. These $b$-fields have an interesting physical
interpretation: they describe the magnetic field created by a small current loop. Accordingly, they are parametrized by the location and orientation of the loop (or vortex), which one associated with a point in the unit sphere bundle of the manifold. After establishing certain useful properties of $b$-fields, we proceed with the proof of Theorem \ref{main reconstruction theorem} in Section \ref{subsec: embedding M into H^s}, adapting the proofs found in \cite{lassas2003, Krupchyk2011} and emphasizing the interesting changes made necessary by the introduction of the $b$-fields. 

To readily see why having to deal with $b$-fields instead of Green's functions introduces essential new difficulties, it is useful to consider the differences between Theorem \ref{main reconstruction theorem} and the kinds of uniqueness results one usually obtains for the Dirichlet-to-Neumann map for Laplacians with analytic data, such as those in \cite{lassas2003, Krupchyk2011}. First, we require that the manifolds $M_1$ and $M_2$ in Theorem \ref{main reconstruction theorem} be simply connected. This is a direct consequence of the fact that in order for the solution of the boundary value problem \eqref{Beltrami boundary problem} to be unique, we require the harmonic part to vanish. We discuss this point in greater detail in Remark \ref{remark on simply connectedness}, following the proof of Proposition \ref{proposition: b-fields agree near boundary}, where the role that simple-connectedness plays in the proof of Theorem \ref{main reconstruction theorem} becomes manifest. 

Second, it would be reasonable to ask why we have not stated Theorem \ref{main reconstruction theorem} in the case of local data, that is, when $\Sigma_{\lambda}$ is known only on an open subset $\Gamma$ of the boundary. Indeed, the arguments used in \cite{Krupchyk2011} to reconstruct the manifold from knowledge of the Green's form for the Hodge Laplacian apply equally well in the case of local data. However, as remarked above, since there does not exist a Green's form for the Beltrami field equation, we use instead an appropriate analogue, which we call the $b$-fields. These $b$-fields have two properties that pose a difficulty in extending Theorem \ref{main reconstruction theorem} to the case of local data. First, these $b$-fields are not generated by point sources, but by loops of small but non-zero radius; and second, unlike the Green's form, which is symmetric in its two arguments, the $b$-fields shall depend on a parameter that is completely distinct from its argument, and so there is no symmetry to exploit. We discuss this in greater detail in Remark \ref{remark: no local theorem}, after it becomes clear how these properties enter into the proof of Theorem \ref{main reconstruction theorem}.


\section{Boundary determination for the Beltrami normal-to-tangential map}\label{sec: boundary determination}

The goal of this section is to show that the Beltrami normal-to-tangential map defined in Section \ref{sec: intro} is a pseudodifferential operator of order $0$ whose total symbol in a boundary chart determines the Taylor series of the metric at the boundary. The strategy will be to use the relation between the Beltrami operator and the Hodge Laplacian for $1$-forms to relate the Beltrami normal-to-tangential map to the Hodge Dirichlet-to-Neumann map, whose symbol is easily computed using methods similar to those in \cite{Lee1989}. We then argue that the symbol of the Beltrami normal-to-tangential map inductively determines the Taylor series of the metric at the boundary. The fundamental results regarding pseudodifferential operators and their symbols that we shall employ in this section may be found in any good text on the subject; see \cite{taylor1981} for example. 

In Section \ref{subsection: relation between DN and NT}, we derive an expression in boundary normal coordinates that relates the Beltrami normal-to-tangential map to the Dirichlet-to-Neumann map of the Hodge Laplacian for $1$-forms. In Section \ref{subsection: computation of total symbol of hodge DN}, we explicitly compute the total symbol of the Dirichlet-to-Neumann map for $1$-forms. Finally, in Section \ref{subsection: boundary determination for NT map} we use the relation derived in Section \ref{subsection: relation between DN and NT} and the results of Section \ref{subsection: computation of total symbol of hodge DN} to show that the total symbol of the Beltrami normal-to-tangential map inductively determines all normal derivatives of the metric at the boundary.

\subsection{Deriving a relation between the Hodge DN map and the Beltrami NT map} \label{subsection: relation between DN and NT}

In this Section, we want to derive a relation between the Dirichlet-to-Neumann map for differential $1$-forms and the normal-to-tangential map for Beltrami fields. As mentioned above, there are several definitions of the Dirichlet-to-Neumann map for $k$-forms found in the literature; see for example \cite{Joshi2005, belishev2008, sharafutdinov2013}. The one that shall be most convenient for our purposes is the first definition provided in \cite{Joshi2005}:

\begin{definition}\label{Hodge DN map definition}
Let $M$ be an $n$-dimensional compact manifold with boundary $\d M$, and let $g$ be a Riemannian metric on $M$. Let $\Lambda^k T^*M$ be the bundle of $k$-forms on $M$. For any $\lambda^2 \notin \Spec{\left( \Delta_d \right)}$, we define the {\bf Hodge Dirichlet-to-Neumann map}
\begin{equation*}
    \Lambda_{g, \lambda^2} : C^\8\left( \Lambda^k T^*M |_{\d M} \right) \to C^\8\left( \Lambda^k T^*M |_{\d M} \right)
\end{equation*}
as follows. For $v \in C^\8\left( \Lambda^k T^*M|_{\d M} \right)$, we may solve the Dirichlet problem
\begin{equation}\label{Dirichlet problem for chi}
    \begin{cases}
    \Delta_d u - \lambda^2 u = 0, \\
    u|_{\d M} = v,
    \end{cases}
\end{equation}
to obtain a unique solution $u \in C^\8(\Lambda^k T^*M)$. We then define $\Lambda_{g,\lambda^2}(v) := \d_n u |_{\d M}$ where in boundary normal coordinates $(x^i)$, $\d_n u |_{\d M}$ is defined as
\begin{equation}\label{Hodge DN map in boundary normal coords}
    \d_n u |_{\d M} := \sum_{I} \d_n u_I |_{\d M} \, dx^{I}.
\end{equation}
\end{definition}

\begin{remark}
It follows from the definition of boundary normal coordinates that the right-hand-side of \eqref{Hodge DN map in boundary normal coords} is simply the Lie derivative of $u$ with respect the inward unit normal, restricted to $\d M$.
\end{remark}

We now want to prove the following relationship between the Hodge Dirichlet-to-Neumann map for $1$-forms as defined above, and the Beltrami normal-to-tangential map:

\begin{proposition}\label{prop: relation between DN and NT map}
Let $\Sigma_{\lambda}$ be the normal-to-tangential map for Beltrami fields corresponding to $\lambda \in \R \setminus \{ 0 \}$, such that $\lambda$ is not a Beltrami singular value, and $\lambda^2$ is not in the Dirichlet spectrum of the Hodge Laplacian $\Delta_d$. Let $\Lambda_{\lambda^2}$ denote the Dirichlet-to-Neumann map of the Hodge Laplacian for $1$-forms at frequency $\lambda^2$. Then in boundary normal coordinates, for all $f \in C^{\infty}_{*}(\d M)$, we have
\begin{equation} \label{theorem eq: Sigma tangential equation}
    \Lambda_{\lambda^2} \left(\Sigma_{\lambda}(f) + f dx^n \right)_{\g} = \frac{\lambda}{\sqrt{|g|}} g_{\a \g} \epsilon^{\a \b} \Sigma_{\lambda}(f)_{\b} + \d_{\g} f 
\end{equation}
\begin{equation}
    \Lambda_{\lambda^2} \left( \Sigma_{\lambda}(f) + f dx^n \right)_3 = A^{\b} \Sigma_{\lambda}(f)_{\b} + C(\d)^{\b} \Sigma_{\lambda}(f)_{\b} +  \frac{1}{\lambda} \d_n \left( \frac{1}{\sqrt{|g|}} \right) \epsilon^{\a \g}  \d_{\a} \Sigma_{\lambda}(f)_{\g} \label{theorem eq: Sigma normal equation}
\end{equation}
where 
\begin{equation*}
    A^{\b} := \frac{1}{\sqrt{|g|}} \epsilon^{\delta \g} \epsilon^{\a \b} \d_{\delta} \left( \frac{1}{\sqrt{|g|}} g_{\a \g} \right),
\end{equation*}
and 
\begin{equation*}
    C(\d)^{\b} := \frac{1}{|g|} \epsilon^{\delta \g} g_{\a \g} \epsilon^{\a \b} \d_{\delta}.
\end{equation*} 
\end{proposition}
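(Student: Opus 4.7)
The plan is to exploit the fact that any Beltrami field~$u$ with $\n\hk u|_{\d M}=f$ simultaneously satisfies the Helmholtz-type equation~\eqref{Hodge Laplacian of u}, so that the Hodge Dirichlet-to-Neumann map applied to $u|_{\d M}$ agrees componentwise with the boundary normal derivative of $u$. First, I would let $u$ be the Beltrami field associated with $f$ by Theorem~\ref{Theorem on existence of Beltrami fields}. In boundary normal coordinates $(x^1,x^2,x^n)$ one has $g_{nn}=1$ and $g_{n\a}=0$, so $\n=\d_n$ and $\n^\flat=dx^n$, and the decomposition~\eqref{decomposition on the boundary} becomes $u|_{\d M}=\Sigma_\lam(f)+f\,dx^n$. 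Because $\lam^2\notin\Spec(\Del_d)$, the Dirichlet problem~\eqref{Dirichlet problem for chi} with this boundary datum has a unique solution, which must be $u$ itself; in particular,
\begin{equation*}
  \Lam_{\lam^2}\bigl(\Sigma_\lam(f)+f\,dx^n\bigr)_I = \d_n u_I\big|_{\d M}, \qquad I\in\{1,2,n\},
\end{equation*}
and the task reduces to computing these normal derivatives directly from the first-order Beltrami equation $*du=\lam u$.

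For the tangential equation~\eqref{theorem eq: Sigma tangential equation}, I would expand $(*du)_\a=\lam u_\a$ in these coordinates. Using the block structure of the metric and writing the three-dimensional Levi-Civita symbol in terms of the two-dimensional boundary symbol $\epsilon^{\a\b}$, a direct computation of the Hodge star gives the pointwise identity
\begin{equation*}
  \frac{g_{\a\b}}{\sqrt{|g|}}\,\epsilon^{\b\g}\bigl(\d_\g u_n-\d_n u_\g\bigr)=\lam u_\a.
\end{equation*}
This is a $2\times 2$ linear system for the unknowns $\d_n u_\g$; the coefficient matrix $g_{\a\b}\epsilon^{\b\g}$ has determinant $|g|$, and its inverse is proportional to $\epsilon^{\a\del}g_{\g\del}$. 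Inverting and evaluating on the boundary (where $u_n=f$ and $u_\b=\Sigma_\lam(f)_\b$) yields~\eqref{theorem eq: Sigma tangential equation} after the antisymmetry relabelings in $\epsilon^{\a\b}$.

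For the normal equation~\eqref{theorem eq: Sigma normal equation}, I would combine two further consequences of the Beltrami equation. The divergence-free identity from~\eqref{Beltrami fields are div free}, written in boundary normal coordinates, isolates
\begin{equation*}
  \d_n u_n\big|_{\d M}=-\frac{\d_n\sqrt{|g|}}{\sqrt{|g|}}\,f-\frac{1}{\sqrt{|g|}}\,\d_\a\!\bigl(\sqrt{|g|}\,g^{\a\b}\,\Sigma_\lam(f)_\b\bigr).
\end{equation*}
The normal component $(*du)_n=\lam u_n$ of the Beltrami equation reduces on $\d M$ to $\lam f=\epsilon^{\a\b}\d_\a\Sigma_\lam(f)_\b/\sqrt{|g|}$, which lets me eliminate the explicit $f$ above. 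The identity $\d_n(1/\sqrt{|g|})=-\d_n\sqrt{|g|}/|g|$ then produces the term involving $\d_n(1/\sqrt{|g|})$ in~\eqref{theorem eq: Sigma normal equation}, while expanding $\sqrt{|g|}\,g^{\a\b}$ through the two-dimensional adjugate identity $\sqrt{|g|}\,g^{\a\b}=\epsilon^{\a\g}\epsilon^{\b\del}g_{\g\del}/\sqrt{|g|}$ and applying the Leibniz rule to the remaining divergence groups the leftover terms into precisely the operators $A^\b$ and $C(\d)^\b$ specified in the statement. The proof is pointwise on $\d M$ and involves no functional-analytic subtleties beyond the two spectral hypotheses; the sole obstacle is the careful index bookkeeping across the matrix inversion and the adjugate rewriting, so that the final expressions match~\eqref{theorem eq: Sigma tangential equation} and~\eqref{theorem eq: Sigma normal equation} exactly.
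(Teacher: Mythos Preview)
Your overall strategy and your derivation of the tangential identity~\eqref{theorem eq: Sigma tangential equation} match the paper's proof: both write $(*du)_\a=\lam u_\a$ in boundary normal coordinates, solve the resulting $2\times 2$ system for $\d_n u_\g$, and identify the answer with the $\g$-component of $\Lam_{\lam^2}$ applied to $u|_{\d M}$ via uniqueness for the Dirichlet problem.

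For the normal identity~\eqref{theorem eq: Sigma normal equation} you take a genuinely different route. The paper obtains $\d_n u_3$ by differentiating the third Beltrami equation $\lam u_3=\frac{1}{\sqrt{|g|}}\epsilon^{\a\b}\d_\a u_\b$ with respect to $x^n$ and then substituting the tangential formula~\eqref{theorem eq: Sigma tangential equation} for $\d_n u_\g$ that it has just derived. You instead read $\d_n u_n$ directly from the divergence-free identity $d^*u=0$, use the undifferentiated third Beltrami equation only to trade the explicit $f$ for $\frac{1}{\lam\sqrt{|g|}}\epsilon^{\a\b}\d_\a\Sigma_\lam(f)_\b$, and then rewrite the tangential divergence via the two-dimensional adjugate identity $\sqrt{|g|}\,g^{\a\b}=\frac{1}{\sqrt{|g|}}\epsilon^{\a\g}\epsilon^{\b\del}g_{\g\del}$. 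Both computations land on the same operators $A^\b$ and $C(\d)^\b$. Your approach is arguably cleaner because it avoids the extra $x^n$-differentiation and the subsequent substitution; it also makes transparent that~\eqref{theorem eq: Sigma normal equation} really comes from $d^*u=0$ rather than from the Beltrami equation proper. The paper itself notes this in Remark~\ref{remark: boundary determination for harmonic fields}, observing that the normal line of~\eqref{Sigma matrix equation} follows from $d^*u=0$ alone and that the apparent $\lam^{-1}$ can therefore be eliminated---which is exactly what your derivation shows explicitly.
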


\begin{proof}
For $f \in C_*^\infty(\d M)$, let $u$ be the Beltrami field whose normal component on the boundary is given by $f$. The Hodge star on the basis of $2$-forms in boundary normal coordinates is given by
\begin{equation*}
    * \left( dx^1 \wedge dx^2 \right) = \sqrt{|g|} \left( g^{11}g^{22} - g^{12}g^{21} \right) dx^3 = \frac{1}{\sqrt{|g|}} dx^3,
\end{equation*}
\begin{equation*}
    * \left( dx^2 \wedge dx^3 \right) = \sqrt{|g|} \left( g^{22} dx^1 - g^{21} dx^2 \right),
\end{equation*}
\begin{equation*}
    * \left( dx^3 \wedge dx^1 \right) = \sqrt{|g|} \left( g^{11} dx^2 - g^{12} dx^1 \right). 
\end{equation*}
Applying this to 
\begin{equation*}
    du = (\d_1 u_2 - \d_2 u_1) dx^1\wedge dx^2 + (\d_2 u_3 - \d_3 u_2) dx^2 \wedge dx^3 + (\d_3 u_1 - \d_1 u_3) dx^3 \wedge dx^1
\end{equation*}
we can express $* du$ as
\begin{align}
    *du &= \sqrt{|g|} (\d_2 u_3 - \d_3 u_2)(g^{22} dx^1 - g^{21} dx^2) + \sqrt{|g|} (\d_3 u_1 - \d_1 u_3)(g^{11} dx^2 - g^{12} dx^1) \nonumber \\
    & \ \ \ \ \ \ \ + \frac{1}{\sqrt{|g|}} (\d_1 u_2 - \d_2 u_1) dx^3
\end{align}
Thus, in boundary normal coordinates, the Beltrami field equations take the form
\begin{equation}\label{Beltrami in normal coordinates u_1}
    \sqrt{|g|} g^{22} (\d_2 u_3 - \d_3 u_2) - \sqrt{|g|} g^{12} (\d_3 u_1 - \d_1 u_3) = \lambda u_1
\end{equation}
\begin{equation}\label{Beltrami in normal coordinates u_2}
    \sqrt{|g|} g^{11} (\d_3 u_1 - \d_1 u_3) - \sqrt{|g|} g^{21} (\d_2 u_3 - \d_3 u_2) = \lambda u_2
\end{equation}
\begin{equation}\label{Beltrami in normal coordinates u_3}
    \frac{1}{\sqrt{|g|}} (\d_1 u_2 - \d_2 u_1) = \lambda u_3.
\end{equation}
Rearranging Equations \eqref{Beltrami in normal coordinates u_1}--\eqref{Beltrami in normal coordinates u_2} yields
\begin{equation}\label{Beltrami normal coord rearrange 1}
    \sqrt{|g|} \left( g^{11} \d_3 u_1 + g^{21} \d_3 u_2 \right) = \lambda u_2 +  \sqrt{|g|} \left( g^{11} \d_1 + g^{21} \d_2 \right) u_3,
\end{equation}
\begin{equation}\label{Beltrami normal coord rearrange 2}
    \sqrt{|g|} \left( g^{12} \d_3 u_1 + g^{22} \d_3 u_2 \right) = -\lambda u_1 + \sqrt{|g|} \left( g^{12} \d_1 + g^{22} \d_2 \right) u_3.
\end{equation}
We can rewrite Equations \eqref{Beltrami normal coord rearrange 1}--\eqref{Beltrami normal coord rearrange 2} as
\begin{equation}\label{beltrami tangential index form take 1}
    \sqrt{|g|} g^{\a \b} \d_3 u_{\b} = \lambda \epsilon^{\a \b} u_{\b} + \sqrt{|g|}g^{\a \b} \d_{\b} u_3,
\end{equation}
where $\epsilon^{\a \b}$ is the usual permutation symbol on $2$ indices. Dividing Equation \eqref{beltrami tangential index form take 1} by $\sqrt{|g|}$ and contracting with $g_{\a \g}$, we can rewrite it as
\begin{equation}\label{beltrami tangential index form take 2}
    \d_3 u_{\g} = \frac{\lambda}{\sqrt{|g|}} g_{\a \g} \epsilon^{\a \b} u_{\b} + \d_{\g} u_3
\end{equation}
It remains to compute the derivative of $u_3$ with respect to $x^3$. To this end, we differentiate Equation \eqref{Beltrami in normal coordinates u_3} with respect to $x^3$ to obtain
\begin{align}
    \d_3 u_3 &= \frac{1}{\lambda \sqrt{|g|}} (\d_1 \d_3 u_2 - \d_2 \d_3 u_1) + \frac{1}{\lambda} \d_3 \left( \frac{1}{\sqrt{|g|}} \right)(\d_1 u_2 - \d_2 u_1) \nonumber \\
    &= \frac{1}{\lambda \sqrt{|g|}} \epsilon^{\delta \g} \d_{\delta} \d_3 u_{\g} + \frac{1}{\lambda} \d_3 \left( \frac{1}{\sqrt{|g|}} \right) \epsilon^{\a \g}  \d_{\a} u_{\g} \label{Beltrami normal index form take 1}.
\end{align}
By substituting Equation \eqref{beltrami tangential index form take 2} into Equation \eqref{Beltrami normal index form take 1}, we obtain
\begin{align}
    \d_3 u_3 &= \frac{1}{\lambda \sqrt{|g|}} \epsilon^{\delta \g} \d_{\delta} \d_3 u_{\g} + \frac{1}{\lambda} \d_3 \left( \frac{1}{\sqrt{|g|}} \right) \epsilon^{\a \g}  \d_{\a} u_{\g} \nonumber \\
    &= \frac{1}{\lambda \sqrt{|g|}} \epsilon^{\delta \g} \d_{\delta} \left(  \frac{\lambda}{\sqrt{|g|}} g_{\a \g} \epsilon^{\a \b} u_{\b} + \d_{\g} u_3 \right) + \frac{1}{\lambda} \d_3 \left( \frac{1}{\sqrt{|g|}} \right) \epsilon^{\a \g}  \d_{\a} u_{\g} \nonumber \\
    &= \frac{1}{\lambda \sqrt{|g|}} \epsilon^{\delta \g} \d_{\delta} \left(  \frac{\lambda}{\sqrt{|g|}} g_{\a \g} \epsilon^{\a \b} u_{\b} \right) + \frac{1}{\lambda} \d_3 \left( \frac{1}{\sqrt{|g|}} \right) \epsilon^{\a \g}  \d_{\a} u_{\g}, \label{Beltrami normal index form take 2}
\end{align}
since the term $\epsilon^{\delta \g} \d_{\delta} \d_{\g} u_3$ vanishes. We thus obtain
\begin{equation}\label{Beltrami normal index form take 3}
    \d_3 u_3 = \frac{1}{\sqrt{|g|}} \epsilon^{\delta \g} \epsilon^{\a \b} \d_{\delta} \left( \frac{1}{\sqrt{|g|}} g_{\a \g} \right) u_{\b} + \frac{1}{|g|} \epsilon^{\delta \g} g_{\a \g} \epsilon^{\a \b} \d_{\delta} u_{\b} +  \frac{1}{\lambda} \d_3 \left( \frac{1}{\sqrt{|g|}} \right) \epsilon^{\a \g}  \d_{\a} u_{\g}.
\end{equation}
Now, since $u_3|_{\d M} = f$, we have $u_{\g}|_{\d M} = \Sigma_{\lambda}(f)_{\g}$ by definition of the normal-to-tangential map. On the other hand, $u$ is also the unique $1$-form on $M$ satisfying
\begin{equation*}
    \begin{cases}
    \Delta_d u = \lambda^2 u, \\
    u|_{\d M} = \Sigma_{\lambda}(f) + f dx^n.
    \end{cases}
\end{equation*}
Therefore, combining Equations \eqref{beltrami tangential index form take 2} and \eqref{Beltrami normal index form take 3}, we have
\begin{align} 
    \Lambda_{\lambda^2} \left(\Sigma_{\lambda}(f) + f dx^3\right)_{\g} &= \d_3 u_{\g} |_{\d M} \nonumber \\
    &= \frac{\lambda}{\sqrt{|g|}} g_{\a \g} \epsilon^{\a \b} \Sigma_{\lambda}(f)_{\b} + \d_{\g} f,  \label{Sigma tangential equation}
\end{align}
\begin{align}
    \Lambda_{\lambda^2} \left( \Sigma_{\lambda}(f) + f dx^3 \right)_3 &= \d_3 u_3 |_{\d M} \nonumber \\
    &= A^{\b} \Sigma_{\lambda}(f)_{\b} + C(\d)^{\b} \Sigma_{\lambda}(f)_{\b} +  \frac{1}{\lambda} \d_3 \left( \frac{1}{\sqrt{|g|}} \right) \epsilon^{\a \g}  \d_{\a} \Sigma_{\lambda}(f)_{\g}, \label{Sigma normal equation}
\end{align}
where 
\begin{equation*}
    A^{\b} := \frac{1}{\sqrt{|g|}} \epsilon^{\delta \g} \epsilon^{\a \b} \d_{\delta} \left( \frac{1}{\sqrt{|g|}} g_{\a \g} \right),
\end{equation*}
and 
\begin{equation*}
    C(\d)^{\b} := \frac{1}{|g|} \epsilon^{\delta \g} g_{\a \g} \epsilon^{\a \b} \d_{\delta},
\end{equation*}
as defined in the statement of Proposition \ref{prop: relation between DN and NT map}. This complete the proof.
\end{proof}
Equations \eqref{Sigma tangential equation} and \eqref{Sigma normal equation} constitute key relation between $\Lambda_{\lambda^2}$ and $\Sigma_{\lambda}$ that we wish to exploit. We may also write Equations \eqref{Sigma tangential equation}--\eqref{Sigma normal equation} in matrix form,
\begin{equation}\label{Sigma matrix equation}
    \begin{pmatrix} \Lambda_{\lambda^2}^{tt} & \Lambda_{\lambda^2}^{tn} \\ \Lambda_{\lambda^2}^{nt} & \Lambda_{\lambda^2}^{nn} \end{pmatrix} \begin{pmatrix} \Sigma_{\lambda}(f) \\ f \end{pmatrix} = \begin{pmatrix} \lambda |g|^{-\frac{1}{2}} G J \Sigma_{\lambda}(f) + d_{\d M} f  \\ A^{\b} \Sigma_{\lambda}(f)_{\b} + C(\d)^{\b} \Sigma_{\lambda}(f)_{\b} +  \lambda^{-1} \d_3 \left( |g|^{-\frac{1}{2}} \right) \epsilon^{\a \g}  \d_{\a} \Sigma_{\lambda}(f)_{\g} \end{pmatrix},
\end{equation}
where the blocks correspond to the splitting of $T^*M|_{\d M}$ into tangential and normal components, the $2 \times 2$ matrices $G$ and $J$ are given by $G_{\a \b} := g_{\a \b}$ and $J^{\a \b} := \epsilon^{\a \b}$, and $d_{\d M}$ is the exterior derivative on $\d M$. We note in passing that the first term on the right-hand-side of Equation \eqref{Sigma tangential equation} is easily seen to equal $\lambda *_{\d M} \Sigma(f)$, and therefore the tangential equation \eqref{Sigma tangential equation} can be written as
\begin{equation}\label{Sigma tangential equation global}
    \Lambda_{\lambda^2}^{tt} \Sigma(f) + \Lambda_{\lambda^2}^{tn}(f) = \lambda *_{\d M} \Sigma(f) + d_{\d M} f.
\end{equation}

\begin{corollary}
The Beltrami normal-to-tangential map $\Sigma_{\lambda}$ is a classical pseudodifferential operator of order $0$ with principal symbol
\begin{equation}\label{principal symbol of Sigma}
    \sigma_0(x,\x) = i\frac{\x}{|\x|_g}.
\end{equation}
\end{corollary}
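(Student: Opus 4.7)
The plan is to read off the corollary directly from the matrix identity derived in Proposition~\ref{prop: relation between DN and NT map} by algebraic inversion combined with standard pseudodifferential calculus. Concretely, rearranging the tangential block of~\eqref{Sigma matrix equation} as
\begin{equation*}
    (\Lambda_{\lambda^2}^{tt} - \lambda *_{\d M})\, \Sigma_\lambda(f) = d_{\d M} f - \Lambda_{\lambda^2}^{tn}(f),
\end{equation*}
expresses $\Sigma_\lambda$ as a composition of pseudodifferential operators whose symbols are already in hand, provided that the operator on the left can be inverted modulo smoothing.

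To treat the right-hand side as such a composition, I would first invoke the fact (established in Section~\ref{subsection: computation of total symbol of hodge DN} by a calculation in the spirit of~\cite{Lee1989}) that $\Lambda_{\lambda^2}$ is a classical pseudodifferential operator of order $1$ on sections of $T^*M|_{\d M}$. Because the Hodge Laplacian has scalar principal symbol $|\xi|_g^2\,\mathrm{Id}$, its Dirichlet-to-Neumann map has principal symbol $|\xi|_g\,\mathrm{Id}$, diagonal with respect to the splitting $T^*M|_{\d M} = T^*\d M \oplus N^*M$. In particular, $\Lambda_{\lambda^2}^{tt}$ has principal symbol $|\xi|_g\,\mathrm{Id}_{T^*\d M}$, while the off-diagonal block $\Lambda_{\lambda^2}^{tn}$ is of order at most $0$.

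Since $*_{\d M}$ is a zeroth-order bundle automorphism on $\Omega^1(\d M)$, the operator $\Lambda_{\lambda^2}^{tt} - \lambda *_{\d M}$ is an elliptic pseudodifferential operator of order $1$ whose principal symbol is again $|\xi|_g\,\mathrm{Id}$. Standard calculus then produces a parametrix $Q$ of order $-1$ with principal symbol $|\xi|_g^{-1}\,\mathrm{Id}$. The right-hand side $d_{\d M} - \Lambda_{\lambda^2}^{tn} : C^\infty(\d M) \to \Omega^1(\d M)$ has order $1$ with principal symbol $i\xi$, as $\Lambda_{\lambda^2}^{tn}$ contributes only at order $0$. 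Composing, $\Sigma_\lambda \equiv Q \circ (d_{\d M} - \Lambda_{\lambda^2}^{tn})$ modulo a smoothing operator, so $\Sigma_\lambda$ is a classical pseudodifferential operator of order $0$ with principal symbol
\begin{equation*}
    \sigma_0(x,\xi) = |\xi|_g^{-1} \cdot i\xi = i\frac{\xi}{|\xi|_g},
\end{equation*}
as claimed.

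The single substantive ingredient is the claim that the principal symbol of $\Lambda_{\lambda^2}$ is scalar and diagonal with respect to the tangential/normal splitting, equivalently that the off-diagonal blocks are genuinely of lower order. This is the main obstacle, and it is precisely what the explicit symbol computation of Section~\ref{subsection: computation of total symbol of hodge DN} provides. Everything else is bookkeeping at the level of principal symbols.
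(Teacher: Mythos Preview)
Your approach is essentially identical to the paper's: both rearrange the tangential block of \eqref{Sigma matrix equation} into $(\Lambda_{\lambda^2}^{tt} - \lambda *_{\d M})\,\Sigma_\lambda = d_{\d M} - \Lambda_{\lambda^2}^{tn}$, apply a parametrix for the elliptic first-order operator on the left, and read off the principal symbol from $|\x|_g\,\sigma_0 = i\x$. The one point you gloss over is that applying the parametrix $Q$ yields $\Sigma_\lambda + \sS\,\Sigma_\lambda = Q(d_{\d M} - \Lambda_{\lambda^2}^{tn})$ with $\sS$ smoothing, and concluding that $\sS\circ\Sigma_\lambda$ is itself smoothing requires an a priori mapping property of $\Sigma_\lambda$ (not yet known to be pseudodifferential); the paper handles this explicitly by invoking the Sobolev estimates of Theorem~\ref{Theorem on existence of Beltrami fields}.
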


\begin{proof}
    The tangential equation \eqref{Sigma tangential equation global} yields
    \begin{equation}\label{before applying parametrix}
        \left(\Lambda_{\lambda^2}^{tt} - \lambda *_{\d M} \right) \Sigma_{\lambda}(f) = d_{\d M} f - \Lambda_{\lambda^2}^{tn}(f).
    \end{equation}
    Since $\Lambda^{tt}_{\lambda^2}$ is an elliptic classical pseudodifferential operator of order $1$, so is $\Lambda^{tt}_{\lambda^2} - \lambda *_{\d M}$, and so it has a parametrix $P$, which is an elliptic classical pseudodifferential operator of order $-1$ satisfying
    \begin{equation*}
        P \left(\Lambda_{\lambda^2}^{tt} - \lambda *_{\d M} \right) = \id + \sS,
    \end{equation*}
    where $\sS$ is a smoothing operator. Applying $P$ to Equation \eqref{before applying parametrix} yields
    \begin{equation*}
        \Sigma_{\lambda}(f) + \sS \Sigma_{\lambda}(f) = P \left( d_{\d M} f - \Lambda^{tn}_{\lambda^2}(f) \right).
    \end{equation*}
    By the the Sobolev estimates given by Theorem \ref{Theorem on existence of Beltrami fields}, $\sS \circ \Sigma_{\lambda}$ extends to a map of distributions into smooth $1$-forms, and so it is a smoothing operator. Therefore, we have that $\Sigma_{\lambda}$ is a classical pseudodifferential operator of order $0$. Recalling that the principal symbol of $\Lambda^{tt}_{\lambda^2}$ is $|\x|_g$, we take the degree-$1$ part of Equation \eqref{Sigma tangential equation global} to obtain
    \begin{equation*}
        |\x|_g \sigma_0(x,\x) = i \x,
    \end{equation*}
    which yields Equation \eqref{principal symbol of Sigma}.
\end{proof}

\subsection{Computing the symbol of Hodge DN map}\label{subsection: computation of total symbol of hodge DN}

The relation \eqref{Sigma matrix equation} between $\Sigma_{\lambda}$ and $\Lambda_{\lambda^2}$ shall be the key ingredient used in Section \ref{subsection: boundary determination for NT map} to prove that the full symbol of $\Sigma_{\lambda}$ determines the normal derivatives of the metric on $\d M$. Since that proof requires an explicit knowledge of the total symbol of the Dirichlet-to-Neumann map for $1$-forms, and since it does not appear that these computations have been done in the literature, we take some time in this Section to apply the standard recipe of \cite{Lee1989} to find the explicit form of the total symbol of $\Lambda_{\lambda^2}$. For the remainder of this Section and the following one, we shall fix a frequency $\lambda$ and omit the subscripts on $\Lambda_{\lambda^2}$ and $\Sigma_{\lambda}$.

We employ the same method of Lee and Uhlmann as in \cite{Lee1989} to the Hodge Laplacian to get the total symbol of the Dirichlet-to-Neumann map in asymptotic series form. So we seek a factorization of the form (where $D = - i \d$)
\begin{equation*}
    \Delta_d - \lambda^2 = (D_n + i E - i B)(D_n + i B)
\end{equation*}
for some pseudodifferential operator $B$. On the other hand, for the Hodge Laplacian $\Delta_d$ on $1$-forms, we have by the Weitzenb\"ock formula,
\begin{equation}\label{weitzenbock}
    \Delta_d u = \nabla^* \nabla u + u \circ \Ric
\end{equation}
where we view $\Ric$ as an endomorphism of $TM$. We choose boundary normal coordinates $(x^1, x^2, x^n)$, and as usual, we let Latin indices run over all $3$ coordinates, whereas Greek indices shall run over only the boundary coordinates. We now want to express Equation \eqref{weitzenbock} in these coordinates:
\begin{equation*}
    \left( \Delta_d u \right)_{\l} = - g^{ij} \left( \nabla_i \nabla_j u \right)_{\l} + g^{ij} \Gamma^k_{ij} \left( \nabla_k u \right)_{\l} + \Ric^p_{\l} u_p
\end{equation*}
Note that we have
\begin{equation*}
    \left( \nabla_k u \right)_{\l} = \d_k u_{\l} - \Gamma^p_{k \l} u_p
\end{equation*}
And therefore
\begin{equation*}
    \left( \nabla_i \nabla_j u \right)_{\l} = \d_i \d_j u_{\l} - \Gamma^p_{j \l} \d_i u_p - \Gamma^p_{i \l} \d_j u_p - \left( \d_i \Gamma^p_{j \l} \right) u_p + \Gamma^q_{i \l} \Gamma^p_{j q} u_p
\end{equation*}
and therefore we get
\begin{align}
    \left( \Delta_d u \right)_{\l} &= -g^{ij}\d_i \d_j u_{\l} + 2 g^{ij}\Gamma^p_{i \l} \d_j u_p + g^{ij} \left( \d_i \Gamma^p_{j \l} \right) u_p - g^{ij} \Gamma^q_{i \l} \Gamma^p_{j q} u_p  + g^{ij} \Gamma^k_{ij} \d_k u_{\l} - g^{ij} \Gamma^k_{ij} \Gamma^p_{k \l} u_p \nonumber \\ &\ \ \ \ \ + \Ric^p_{\ \l} u_p
\end{align}
The Ricci tensor is given by
\begin{equation}\label{Ric}
    \Ric^i_{\ \l} = g^{ij}\Ric_{j \l} = g^{ij}\d_k \Gamma^k_{\l j} - g^{ij}\d_{\l} \Gamma^k_{k j} + g^{ij}\Gamma^k_{k p} \Gamma^p_{\l j} - g^{ij}\Gamma^k_{\l p} \Gamma^p_{kj}.
\end{equation}
We now separate out the normal derivatives and tangential derivatives:
\begin{align}
    \left( \Delta_d u \right)_{\l} - \lambda^2 u_{\l} &= -\d_n^2 u_{\l} -g^{\a \b}\d_\a \d_\b u_{\l} + 2 \Gamma^{p}_{n \l} \d_n u_p + 2 g^{\a \b}\Gamma^p_{\a \l} \d_\b u_p + \left( \d_n \Gamma^p_{n \l} \right) u_p + g^{\a \b} \left( \d_{\a} \Gamma^p_{\b \l} \right) u_p \nonumber \\
    & \ \ \ \ - g^{ij} \Gamma^q_{i \l} \Gamma^p_{j q} u_p  + g^{ij} \Gamma^n_{ij} \d_n u_{\l} + g^{ij} \Gamma^{\a}_{ij} \d_\a u_{\l} - g^{ij} \Gamma^k_{ij} \Gamma^p_{k \l} u_p + \Ric^p_{\ \l} u_p \nonumber \\
    &= D_n^2 u_\l + i \left( 2 \Gamma^p_{n \l} + g^{\a \b} \Gamma^n_{\a \b}  \right) D_n u_{\l} + Q_2 u + Q_1 u + Q_0 u
\end{align}
where for $i \in \{0,1,2\}$, $Q_i$ is the $i$-th order differential operator on $\d M$ defined by
\begin{align}
    \left(Q_2 u\right)_{\l} &:= -g^{\a \b}\d_\a \d_\b u_{\l} \\
    \left(Q_1 u \right)_{\l} &:= \left( 2 g^{\a \b} \Gamma^p_{\a \l} \d_\b + \delta^p_{\l} g^{\g \lambda} \Gamma^{\a}_{\g \lambda} \d_\a \right) u_p \\
    \left( Q_0 u \right)_{\l} &:= \left( \d_n \Gamma^p_{n \l} + g^{\a \b} \d_\a \Gamma^p_{\b \l} - g^{ij} \Gamma^q_{i \l} \Gamma^p_{j q} - g^{ij} \Gamma^k_{ij} \Gamma^p_{k \l} + \Ric^p_{\ \l} - \lambda^2 \delta^p_{\l} \right) u_p. \label{Q_0}
\end{align}
So, letting $(Eu)_{\l} = 2 \Gamma^p_{n \l} u_p + g^{\a \b} \Gamma^n_{\a \b} u_\l$, we have
\begin{equation*}
    \Delta u = D_n^2 u + i E D_n u + Q_2 u + Q_1 u + Q_0 u,  
\end{equation*}
which upon comparing with the factorization, gives us
\begin{equation*}
    i [ D_n , B ] - EB + B^2 = Q_2 + Q_1 + Q_0.
\end{equation*}
Taking total symbols of this equation, we get
\begin{equation}\label{equation for total symbol of b}
    \d_n b - E b + \sum \frac{1}{\a!} \d_{\x}^\a b D_{x'}^\a b = q_2 + q_1 + q_0.
\end{equation}
Writing 
\begin{equation*}
    b(x,\x) = \sum_{m \leq 1} b_m(x,\x)
\end{equation*}
where each $b_m$ is positive-homogeneous in $\x$ of degree $m$, the degree-$2$ part of Equation \eqref{equation for total symbol of b} gives $b_1 = \sqrt{q_2} = |\x|$ as expected. The degree $1$ equation yields
\begin{equation}\label{degree 1 equation hodge symbol}
    \d_n b_1 - E b_1 + 2 b_0 b_1 + \sum \d_\x b_1 \cdot D_{x'} b_1 = q_1.
\end{equation}
Now, before proceeding, we remark that for the proof of Theorem \ref{Main Theorem for Beltrami Calderon} in Section \ref{subsection: boundary determination for NT map}, we shall only need to know the explicit dependence of $b_0$ on $\d_n g^{\a \b}|_{\d M}$ modulo terms involving $g_{\a \b}|_{\d M}$ and tangential derivatives thereof. In fact, for $m \leq 0$, we shall only need the explicit dependence of $b_{m}$ on $\d^{|m|+1}_n g^{\a \b}|_{\d M}$ modulo terms involving lower-order normal derivatives of $g$ and their tangential derivatives on $\d M$. For convenience, therefore, we introduce the following notation. Let $\cO_m$ denote any quantity on $\d M$ which depends on normal derivatives of $g$ up to order $|m|$, and tangential derivatives thereof. Thus, letting $\omega = |\x|^{-1} \x$ in Equation \eqref{degree 1 equation hodge symbol}, we have
\begin{align}
    b_0 &= \frac{1}{2 b_1} \left( q_1 + E b_1 - \d_n b_1 \right) + \cO_0 \nonumber \\
    &= \frac{1}{2 |\x|} \left( 2 i g^{\a \b} \Gamma^p_{\a \l} \x_\b + i \delta^p_{\l} g^{\g \lambda} \Gamma^{\a}_{\g \lambda} \x_\a + 2 |\x| \Gamma^p_{n \l} + |\x| \delta^p_{\l} g^{\a \b} \Gamma^n_{\a \b} - \frac{1}{2 |\x|} \left( \d_n g^{\a \b} \right) \x_\a \x_\b \right) + \cO_0\nonumber \\
    &= i \omega_\a \left( g^{\a \b} \Gamma^p_{\b \l} + \frac{1}{2} \delta^p_{\l} g^{\g \lambda} \Gamma^{\a}_{\g \lambda}  \right) + \omega_\a \omega_\b \left( g^{\a \b} \Gamma^p_{n \l} + \frac{1}{2} g^{\a \b} \delta^p_{\l} g^{\g \lambda} \Gamma^n_{\g \lambda} - \frac{1}{4} \delta^p_{\l} \d_ng^{\a \b} \right) + \cO_0 \label{b_0 full}
\end{align}
For later use, we decompose $b_0$ into its tangential and normal components, modulo $\cO_0$:
\begin{equation}\label{b_0^nn}
    b_0^{nn} = \omega_\a \omega_\b \left( \frac{1}{2} g^{\a \b} g^{\g \lambda} \Gamma^n_{\g \lambda} - \frac{1}{4} \d_ng^{\a \b} \right) + \cO_0
\end{equation}
\begin{equation}\label{b_0^tn}
    \left( b_0^{tn} \right)_\n = i \omega_\a g^{\a \b} \Gamma^n_{\b \n} + \cO_0
\end{equation}
\begin{equation}\label{b_0^nt}
    \left( b_0^{nt} \right)^\m = i \omega_\a g^{\a \b} \Gamma^{\m}_{n \b} + \cO_0
\end{equation}
\begin{equation}\label{b_0^tt}
    \left( b_0^{tt} \right)^{\m}_{\n} =  \omega_\a \omega_\b \left( g^{\a \b} \Gamma^{\m}_{n \n} + \frac{1}{2} g^{\a \b} \delta^{\m}_{\n} g^{\g \lambda} \Gamma^n_{\g \lambda} - \frac{1}{4} \delta^{\m}_{\n} \d_ng^{\a \b} \right) + \cO_0.
\end{equation}
We also want to derive expressions for $b_{-1}$ modulo terms in $\cO_1$. For this, note that the degree $0$ part of Equation \eqref{equation for total symbol of b} gives us
\begin{equation*}
    b_{-1} = \frac{1}{2 |\x|} \left( q_0 - \d_n b_0 \right) + \cO_{1}.
\end{equation*}
Here we introduce yet another notation that shall be very convenient for us in Section \ref{subsection: boundary determination for NT map}. Let $\cD$ denote any one of the following equivalent quantities,
\begin{align}\label{defining cD}
    \cD &:= g^{\a \b} \Gamma^n_{\a \b} = - \Gamma^{\a}_{n \a} = - \frac{1}{2}g^{\a \b} \d_n g_{\a \b} = \frac{1}{2} g_{\a \b} \d_n g^{\a \b} = -\frac{1}{\sqrt{|g|}} \d_n \left( \sqrt{|g|} \right),
\end{align}
where $|g|$ denotes $\det{g}$ in these coordinates. Then using Equation \eqref{b_0 full}, we obtain
\begin{equation*}
    \d_n \left( b_0 \right)^p_{\ \l} = i \omega_{\a} g^{\a \b} \d_n \Gamma^p_{\b \l} + \omega_{\a} \omega_{\b} \left( g^{\a \b} \d_n \Gamma^p_{n \l} + \frac{1}{2}  g^{\a \b}\delta^p_{\l} \d_n \cD - \frac{1}{4} \delta^p_{\l} \d_n^2 g^{\a \b} \right) + \cO_{1}.
\end{equation*}
Using Equations \eqref{Q_0} and \eqref{Ric}, we obtain
\begin{align}
    \left( q_0 \right)^p_{\ \l} &= \d_n \Gamma^p_{n \l} + \Ric^p_{\ \l} + \cO_1 \nonumber \\
    &= \d_n \Gamma^p_{n \l} + g^{p j} \d_n \Gamma^n_{\l j} - g^{p j} \d_{\l} \Gamma^{\a}_{j \a} + \cO_1 \label{q_0 + O_1}.
\end{align}
Therefore, we have
\begin{align}
    \left( b_{-1} \right)^p_{\ \l} &= i \omega_{\a} g^{\a \b} \d_n \Gamma^p_{\b \l} + \frac{1}{2 |\x|} \omega_{\a} \omega_{\b} \bigg( g^{\a \b} \d_n \Gamma^p_{n \l} + g^{\a \b}g^{p j} \d_n \Gamma^n_{\l j} - g^{\a \b} g^{p j} \d_{\l} \Gamma^{\a}_{j \a} + g^{\a \b} \d_n \Gamma^p_{n \l}  \nonumber \\
    &\ \ \ \ \ \ \ + \frac{1}{2}  g^{\a \b}\delta^p_{\l} \d_n \cD -\frac{1}{4} \delta^p_{\l} \d_n^2 g^{\a \b} \bigg) + \cO_1.
\end{align}
As with $b_0$, we separate out the tangential and normal blocks of $b_{-1}$,
\begin{equation}\label{b_-1 tt}
    \left( b^{tt}_{-1} \right)^{\m}_{\ \n} = \frac{1}{2 |\x|} \omega_{\a} \omega_{\b} \left( g^{\a \b} \d_n \Gamma^{\m}_{n \n} + g^{\a \b}g^{\m \g} \d_n \Gamma^n_{\n \g} + g^{\a \b} \d_n \Gamma^{\m}_{n \n} + \frac{1}{2}  g^{\a \b}\delta^{\m}_{\n} \d_n \cD -\frac{1}{4} \delta^{\m}_{\n} \d_n^2 g^{\a \b} \right) + \cO_1,
\end{equation}
\begin{equation}\label{b_-1 tn}
    \left( b^{tn}_{-1} \right)_{\ \n} = \frac{i}{2|\x|} \omega_{\a} g^{\a \b} \d_n \Gamma^n_{\b \n} + \cO_1,
\end{equation}
\begin{equation}\label{b_-1 nt}
    \left( b^{nt}_{-1} \right)^{\m} = \frac{i}{2|\x|} \omega_{\a} g^{\a \b} \d_n \Gamma^{\m}_{\b n} + \cO_1,
\end{equation}
\begin{equation}\label{b_-1 nn}
    b_{-1}^{nn} = \frac{1}{2 |\x|} \omega_{\a} \omega_{\b} \left( -g^{\a \b} \d_{n} \Gamma^{\g}_{n \g} + \frac{1}{2} g^{\a \b} \d_n \cD - \frac{1}{4} \d_n^2 g^{\a \b} \right) + \cO_1.
\end{equation}
For $m \leq -2$, the degree $m$ part of Equation \eqref{equation for total symbol of b} yields
\begin{equation*}
    b_{m} = -\frac{1}{2 |\x|} \d_n b_{m+1} + \cO_{|m|}.
\end{equation*}
From Equations \eqref{b_-1 tt}--\eqref{b_-1 nn}, it is clear that by induction, we obtain
\begin{align}\label{b_-m tt}
    \left( b^{tt}_{m} \right)^{\m}_{\ \n} &= \frac{(-1)^{|m|+1}}{2^{|m|} |\x|^{|m|}} \omega_{\a} \omega_{\b} \bigg( g^{\a \b} \d^{|m|}_n \Gamma^{\m}_{n \n} + g^{\a \b}g^{\m \g} \d^{|m|}_n \Gamma^n_{\n \g} + g^{\a \b} \d^{|m|}_n \Gamma^{\m}_{n \n} + \frac{1}{2}  g^{\a \b}\delta^{\m}_{\n} \d^{|m|}_n \cD \nonumber \\
    &\ \ \ \ \ \ \ -\frac{1}{4} \delta^{\m}_{\n} \d_n^{|m|+1} g^{\a \b} \bigg) + \cO_{|m|},
\end{align}
\begin{equation}\label{b_-m tn}
    \left( b^{tn}_{m} \right)_{\ \n} = \frac{i(-1)^{|m|+1}}{2^{|m|} |\x|^{|m|}} \omega_{\a} g^{\a \b} \d_n^{|m|} \Gamma^n_{\b \n} + \cO_{|m|},
\end{equation}
\begin{equation}\label{b_-m nt}
    \left( b^{nt}_{m} \right)^{\m} = \frac{i(-1)^{|m|+1}}{2^{|m|} |\x|^{|m|}} \omega_{\a} g^{\a \b} \d_n^{|m|} \Gamma^{\m}_{\b n} + \cO_{|m|},
\end{equation}
\begin{equation}\label{b_-m nn}
    b_{m}^{nn} = \frac{(-1)^{|m|+1}}{2^{|m|} |\x|^{|m|}} \omega_{\a} \omega_{\b} \left( -g^{\a \b} \d_{n}^{|m|} \Gamma^{\g}_{n \g} + \frac{1}{2} g^{\a \b} \d^{|m|}_n \cD - \frac{1}{4} \d_n^{|m|+1} g^{\a \b} \right) + \cO_{|m|}.
\end{equation}

\subsection{Determining the Taylor series of the metric at the boundary} \label{subsection: boundary determination for NT map}

In this section, we use our knowledge of the total symbol of $\Lambda$ computed in Section \ref{subsection: computation of total symbol of hodge DN}, as well as Equation \eqref{Sigma matrix equation}, to prove the following:

\begin{theorem}\label{Main Theorem for Beltrami Calderon}
    The full symbol of the Beltrami normal-to-tangential map $\Sigma$ determines all normal derivatives of the metric $g$ at the boundary.
\end{theorem}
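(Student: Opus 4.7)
The plan is to combine the explicit relation between $\Sigma_\lambda$ and $\Lambda_{\lambda^2}$ from Proposition \ref{prop: relation between DN and NT map} with the symbolic formulas computed in Section \ref{subsection: computation of total symbol of hodge DN} to recover the normal derivatives $\d_n^j g|_{\d M}$ inductively on $j$. The base case is immediate: the principal symbol $\sigma_0(\x) = i\x/|\x|_g$ of $\Sigma_\lambda$ determines $|\x|_g^2 = g^{\a\b}(x')\,\x_\a\x_\b$, hence the tangential boundary metric $g|_{\d M}$.

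For the inductive step, I would assume $g|_{\d M}, \d_n g|_{\d M}, \dots, \d_n^k g|_{\d M}$ are known, and read off $\d_n^{k+1} g^{\a\b}|_{\d M}$ from the degree-$(-k)$ part of the symbolic identity \eqref{Sigma matrix equation}. Applying the composition formula $\sigma(AB) \sim \sum_\a \frac{(-i)^{|\a|}}{\a!}\,\d_\x^\a \sigma(A)\,\d_x^\a \sigma(B)$ and isolating the homogeneous degree-$(-k)$ piece, the left-hand side yields a principal contribution $|\x|_g\,s_{-k-1}$ (where $s_m$ denotes the degree-$m$ part of $\sigma(\Sigma_\lambda)$), the algebraic terms $b^{**}_{-k}\cdot s_0$ evaluated on the known $s_0 = i\x/|\x|_g$, plus corrections $\d_\x^\a b_m^{**}\cdot\d_x^\a s_j$ with $m+j-|\a|=-k$ and strictly $m > -k$, in which only lower-order normal derivatives of $g$ (all known by induction) appear; the right-hand side similarly involves only $s_j$ with $j\ge -k$ together with $\d_n g, \dots, \d_n^k g$ (arising, for example, from $\lambda^{-1}\d_n(|g|^{-1/2})$ in the normal equation). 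The key structural point is that, by the explicit formulas \eqref{b_-m tt}--\eqref{b_-m nn}, the symbols $b^{tt}_{-k}, b^{tn}_{-k}, b^{nt}_{-k}, b^{nn}_{-k}$ depend linearly on $\d_n^{k+1} g^{\a\b}|_{\d M}$ modulo terms in $\cO_k$ which are already determined.

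Since in boundary normal coordinates $g_{\a n}\equiv 0$ and $g_{nn}\equiv 1$, the three independent components of the symmetric tensor $\d_n^{k+1} g^{\a\b}|_{\d M}$ are the only new unknowns at this stage. The tangential identity \eqref{Sigma tangential equation} contributes two scalar equations at order $-k$ and the normal identity \eqref{Sigma normal equation} contributes one more, producing a $3\times 3$ linear system for these three unknowns, with coefficients that are known functions of $g|_{\d M}, \d_n g|_{\d M}, \dots, \d_n^k g|_{\d M}$ and of $\omega = \x/|\x|$. The main obstacle, and the principal computational content of the argument, will be to verify that this linear system is nondegenerate as a function of $\omega$ on a dense subset of $T^*\d M$, so that it can be uniquely inverted. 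The $-\tfrac14 \delta^\m_\n \d_n^{k+1}g^{\a\b}$ contribution in \eqref{b_-m tt} and the $-\tfrac14 \d_n^{k+1}g^{\a\b}$ contribution in \eqref{b_-m nn} suggest that, combined with the Christoffel-symbol terms (whose top-order normal derivatives are likewise linear in $\d_n^{k+1} g$), the leading behavior in $\omega$ does yield invertibility; but this must be confirmed by a direct calculation. Once invertibility is established, $\d_n^{k+1} g^{\a\b}|_{\d M}$ is uniquely determined by the total symbol of $\Sigma_\lambda$, closing the induction.
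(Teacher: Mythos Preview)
Your overall strategy matches the paper's: take symbols in the identities of Proposition~\ref{prop: relation between DN and NT map}, use the formulas \eqref{b_-m tt}--\eqref{b_-m nn} for the symbol of $\Lambda_{\lambda^2}$, and induct on the order of the normal derivative. The base case and the bookkeeping of which terms are $\cO_k$ are correct.

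There is, however, a concrete error in the mechanism you propose for the inductive step. You assert that at each fixed $\omega$ the tangential identity contributes two independent scalar equations and the normal identity one more, giving an invertible $3\times 3$ system. This fails: at any boundary point (choose coordinates so that $g^{\a\b}=\delta^{\a\b}$ there) a direct computation from \eqref{b_-m tt}--\eqref{b_-m nn} shows that the tangential equation at order $-k$ collapses to
\[
\tfrac{1}{4}\,\omega_\nu\big(\tr h - h^{\a\b}\omega_\a\omega_\b\big)=\cO_k,\qquad h^{\a\b}:=\d_n^{k+1}g^{\a\b},
\]
so its two components are proportional to $\omega_\nu$ and carry only \emph{one} independent relation. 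Together with the normal equation, which gives $\tfrac34\tr h+\tfrac14 h^{\a\b}\omega_\a\omega_\b=\cO_k$, you obtain at fixed $\omega$ only the pair $(\tr h,\ h(\omega,\omega))$; a third relation is missing, and your $3\times 3$ matrix is singular for \emph{every} $\omega$, not merely on a thin set.

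The paper does not work at a fixed covector. It uses that the symbolic identities hold identically in $\omega$, extracts the full (symmetrized) polynomial coefficients, and then isolates the unknowns by two successive algebraic moves: first a trace recovers $\d_n^{k}\cD$ (equivalently $\tr h$), and then a subtraction of the reduced tangential and normal relations, based on the identity $g^{\b\m}g^{\a\g}\Gamma^n_{\g\m}=-g^{\a\g}\Gamma^{\b}_{n\g}$ (equations \eqref{mystery term 1}--\eqref{mystery term 2}), cancels the remaining Christoffel contributions and leaves $\tfrac14\,\d_n^{k+1}g^{\a\b}=\cO_k$. Your plan can be repaired by replacing the fixed-$\omega$ system with the polynomial-coefficient extraction; but the invertibility you hoped to check pointwise in $\omega$ is simply not there, and the actual computational content of the proof lies in the trace-then-subtract manoeuvre just described.
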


\begin{proof}
    We consider the tangential and normal parts of Equation \eqref{Sigma matrix equation} separately, 
\begin{equation}\label{pre-symbol equations Sigma Lambda tang}
    \left( \Lambda^{tt} \right)^{\m}_{\ \n} \Sigma(f)_{\m} + \left( \Lambda^{tn} \right)_{\n} f = \frac{\lambda}{\sqrt{|g|}} g_{\n \a} \epsilon^{\a \b} \Sigma(f)_{\b} + \d_{\n} f,
\end{equation}
\begin{equation}\label{pre-symbol equations Sigma Lambda norm}
    \left( \Lambda^{nt} \right)^{\m} \Sigma(f)_{\m} + \left( \Lambda^{nn} \right) f = A^{\b} \Sigma(f)_{\b} + C(\d)^{\b} \Sigma(f)_{\b} +  \frac{1}{\lambda} \d_3 \left( \frac{1}{\sqrt{|g|}} \right) \epsilon^{\a \g}  \d_{\a} \Sigma(f)_{\g}.
\end{equation}
We take symbols of Equations \eqref{symbol equations Sigma Lambda tang} and \eqref{pre-symbol equations Sigma Lambda norm} to obtain
\begin{equation}\label{symbol equations Sigma Lambda tang}
    \left( b^{tt} \right)^{\m}_{\ \n} \, \# \, \sigma_{\m} + \left( b^{tn} \right)_{\n} = \frac{\lambda}{\sqrt{|g|}} g_{\n \a} \epsilon^{\a \b} \sigma_{\b} + i \x_{\n},
\end{equation}
\begin{equation}\label{symbol equations Sigma Lambda norm}
    \left( b^{nt} \right)^{\m} \, \# \, \sigma_{\m} +  b^{nn}  = A^{\b} \sigma_{\b} + C(i\x)^{\b} \, \# \, \sigma_{\b} +  \frac{i}{\lambda} \d_3 \left( \frac{1}{\sqrt{|g|}} \right) \epsilon^{\a \g}  \x_{\a} \, \# \, \sigma_{\g},
\end{equation}
where we have denoted the total symbol of $\Lambda$ by $b$ as in Section \ref{subsection: computation of total symbol of hodge DN}, the total symbol of $\Sigma$ by $\sigma$, and the composition of symbols by $\#$.

In this Section, we shall use the notation $\cO_m$ as in Section \ref{subsection: computation of total symbol of hodge DN} to denote any known quantity depending on normal derivatives of the metric up to order $m$, and tangential derivatives thereof. Here, we also include in $\cO_m$ any quantity depending on the symbol of $\Sigma$.

Observe first that from the principal symbol $\sigma_0$ of $\Sigma$, we can determine the metric at the boundary. Therefore, since $A^{\b}$ and $C(i \x)^{\b}$ are $\cO_0$, and since
\begin{equation*}
    \x_{\a} \, \# \, \sigma_{\g} = \x_{\a} \sigma_{\g} - i \d_{\a} \sigma_{\g},
\end{equation*}
we can re-write Equations \eqref{symbol equations Sigma Lambda tang}--\eqref{symbol equations Sigma Lambda norm} as
\begin{equation}\label{Tangentian equation O_0}
    \left( b^{tt} \right)^{\m}_{\ \n} \, \# \, \sigma_{\m} + \left( b^{tn} \right)_{\n} = \cO_0,
\end{equation}
\begin{equation}\label{Normal equation O_0}
     \left( b^{nt} \right)^{\m} \, \# \, \sigma_{\m} +  b^{nn} = \cO_0 + \frac{i}{\lambda} \d_3 \left( \frac{1}{\sqrt{|g|}} \right) \x_{\a} \epsilon^{\a \g} \sigma_{\g} + \frac{1}{\lambda} \d_3 \left( \frac{1}{\sqrt{|g|}} \right) \epsilon^{\a \g} \d_{\a} \sigma_{\g}. 
\end{equation}
Note that by the definition of $\cD$ \eqref{defining cD}, we have
\begin{equation*}
    \d_3 \left( \frac{1}{\sqrt{|g|}} \right) = \frac{1}{2 |g|} \cD.
\end{equation*}
Now, the degree $0$ part of Equations \eqref{Tangentian equation O_0}--\eqref{Normal equation O_0} can be written as
\begin{equation}\label{Tangential degree 1}
    \left( b^{tt}_{1} \right)^{\m}_{\ \n} \left( \sigma_{-1} \right)_{\m} + \left( b^{tt}_{0} \right)^{\m}_{\ \n} \left( \sigma_0 \right)_{\m} + \left( b_0^{tn} \right)_{\n} = \cO_0,
\end{equation}
\begin{equation}\label{Normal degree 1}
    \left( b_{0}^{nt} \right)^{\m} \left( \sigma_0 \right)_{\m} + b^{nn}_0 = \frac{i}{2 \lambda |g| } \cD \x_{\a} \epsilon^{\a \g} \left( \sigma_{-1} \right)_{\g} + \frac{1}{2 \lambda |g|} \cD \epsilon^{\a \g} \d_{\a} \left( \sigma_0 \right)_\g + \cO_0.
\end{equation}
Note that in Equation \eqref{Tangential degree 1}, we have explicitly written the term $b_1^{tt} \cdot \sigma_{-1}$, even though it is $\cO_0$. This is so that we can solve for $\sigma_{-1}$,
\begin{equation}\label{solving for sigma_1}
    \left( \sigma_{-1} \right)_{\n} = -\frac{1}{|\x|} \left( \left( b^{tt}_{0} \right)^{\m}_{\ \n} \left( \sigma_0 \right)_{\m} + \left( b_0^{tn} \right)_{\n} \right) + \cO_0.
\end{equation}
From Equation \eqref{b_0^tt}, we see that
\begin{equation}\label{b_0^tt O_0}
    \left( b_0^{tt} \right)^{\m}_{\n} =  \omega_\a \omega_\b \left( g^{\a \b} \Gamma^{\m}_{n \n} + \frac{1}{2} \cD g^{\a \b} \delta^{\m}_{\n} - \frac{1}{4} \delta^{\m}_{\n} \d_ng^{\a \b} \right) + \cO_0.
\end{equation}
where we recall that we have set $\omega = |\x|^{-1} \x$. So using Equations \eqref{b_0^tt O_0} and \eqref{b_0^tn}, we compute the first term on the right-hand-side of Equation \eqref{Normal degree 1} to be
\begin{align}\label{first term on RHS of degree 1}
    \frac{i \cD}{2 \lambda |g|} \x_{\m} \epsilon^{\m \n} \left( \sigma_{-1} \right)_{\n} &= -\frac{i \cD}{2 \lambda |g|} \frac{\x_{\m}}{|\x|} \epsilon^{\m \gamma} \left( \left( b^{tt}_{0} \right)^{\rho}_{\ \n} \left( \sigma_0 \right)_{\rho} + \left( b_0^{tn} \right)_{\n} \right) + \cO_0 \nonumber \\
    &= \frac{\cD}{2 \lambda |g|} \omega_{\m} \omega_{\a} \omega_{\b} \omega_{\rho} \epsilon^{\m \n} \left( g^{\a \b} \Gamma^{\rho}_{n \n} + \frac{1}{2} \cD g^{\a \b} \delta^{\rho}_{\n} - \frac{1}{4} \delta^{\rho}_{\n} \d_n g^{\a \b} + g^{\rho \b} g^{\a \g} \Gamma^n_{\g \n} \right) + \cO_0 \nonumber \\
    &= \frac{\cD}{2 \lambda |g|} \omega_{\m} \omega_{\a} \omega_{\b} \omega_{\rho} \left( \epsilon^{\m \n} g^{\a \b} \Gamma^{\rho}_{n \n} + \frac{1}{2} \cD g^{\a \b} \epsilon^{\m \rho} - \frac{1}{4} \epsilon^{\m \rho} \d_n g^{\a \b} + \epsilon^{\m \n} g^{\rho \b} g^{\a \g} \Gamma^n_{\g \n} \right) + \cO_0
\end{align}
where in obtaining the last line, we have re-written Equation \eqref{b_0^tn} as
\begin{equation*}
    \left( b_0^{tn} \right)_\n = i \omega_\a g^{\a \g} \Gamma^n_{\g \n} = i \omega_{\a} \omega_{\m} \omega_{\b} g^{\m \b} g^{\a \g} \Gamma^n_{\g \n}
\end{equation*}
since $g^{\m \b} \omega_{\m} \omega_{\b} = 1$. Now, since
\begin{equation*}
    \d_{\a} \left( \sigma_0 \right)_{\g} = \d_{\a} \left( \frac{i \x_{\g}}{|\x|} \right) = -\frac{i}{2} \frac{1}{|\x|^3} \d_{\a}g^{\b \rho} \x_{\b} \x_{\rho} \x_{\g},
\end{equation*}
the second-term on the right-hand side of Equation \eqref{Normal degree 1} is equal to
\begin{equation}\label{cubic term}
    \Omega^{\b \rho \g} \omega_{\b} \omega_{\rho} \omega_{\g} := -\frac{i}{4 \lambda |g|} \cD \epsilon^{\a \g} \d_{\a}g^{\b \rho} \omega_{\b} \omega_{\rho} \omega_{\g}.
\end{equation}
Finally, we compute the left-hand-side of Equation \eqref{Normal degree 1} using Equations \eqref{b_0^nn} and \eqref{b_0^nt}, which yields
\begin{align}\label{LHS of degree 1}
    \left( b_{0}^{nt} \right)^{\b} \left( \sigma_0 \right)_{\b} + b^{nn}_0 &= \omega_{\a} \omega_{\b} \left( -g^{\a \g} \Gamma^{\b}_{n \g} + \frac{1}{2} g^{\a \b} \cD - \frac{1}{4} \d_n g^{\a \b} \right) + \cO_0 \nonumber \\
    &= \omega_{\a} \omega_{\b} \omega_{\m} \omega_{\rho} \left( -g^{\m \rho}g^{\a \g} \Gamma^{\b}_{n \g} + \frac{1}{2} g^{\m \rho} g^{\a \b} \cD - \frac{1}{4} g^{\m \rho} \d_n g^{\a \b} \right) + \cO_0,
\end{align}
where in obtaining the last line, we have again inserted $g^{\m \rho} \omega_{\m} \omega_{\rho} = 1$. Therefore, upon combining Equations \eqref{first term on RHS of degree 1}, \eqref{cubic term}, and \eqref{LHS of degree 1}, we can rewrite Equation \eqref{Normal degree 1} as
\begin{align}
    \cO_0 &= \omega_\a \omega_\b \omega_{\m} \omega_{\rho} \bigg( -g^{\m \rho}g^{\a \g} \Gamma^{\b}_{n \g} + \frac{1}{2} g^{\m \rho} g^{\a \b} \cD - \frac{1}{4} g^{\m \rho} \d_n g^{\a \b} - \frac{\cD}{2 \lambda |g|} \epsilon^{\m \n} g^{\a \b} \Gamma^{\rho}_{n \n} - \frac{\cD^2}{4 \lambda |g|} g^{\a \b} \epsilon^{\m \rho} \nonumber \\
    &\ \ \ \ \ \ \ + \frac{\cD}{8 \lambda |g|} \epsilon^{\m \rho} \d_n g^{\a \b} - \frac{\cD}{2\lambda |g|} \epsilon^{\m \n} g^{\rho \b} g^{\a \g} \Gamma^n_{\g \n} \bigg) - \Omega^{\b \rho \g} \omega_{\b} \omega_{\rho} \omega_{\g}.
\end{align}
By parity, we can recover the coefficients of $\omega_\a \omega_\b \omega_{\m} \omega_{\rho}$, and thus we have
\begin{align}
    \cO_0 &= -g^{\m \rho}g^{\a \g} \Gamma^{\b}_{n \g} + \frac{1}{2} g^{\m \rho} g^{\a \b} \cD - \frac{1}{4} g^{\m \rho} \d_n g^{\a \b} - \frac{\cD}{2 \lambda |g|} \epsilon^{\m \n} g^{\a \b} \Gamma^{\rho}_{n \n} - \frac{\cD^2}{4 \lambda |g|} g^{\a \b} \epsilon^{\m \rho} \nonumber \\
    &\ \ \ \ \ \ \ + \frac{\cD}{8 \lambda |g|} \epsilon^{\m \rho} \d_n g^{\a \b} - \frac{\cD}{2\lambda |g|} \epsilon^{\m \n} g^{\rho \b} g^{\a \g} \Gamma^n_{\g \n}.
\end{align}
Contracting this expression with $g_{\m \rho}$, and using $g_{\m \rho} g^{\m \rho} = 2$, $g_{\m \rho} \epsilon^{\m \rho} = 0$, and $g_{\m \rho} g^{\rho \beta} = \delta^{\b}_{\m}$, we obtain
\begin{equation*}
    -2g^{\a \g} \Gamma^{\b}_{n \g} + g^{\a \b} \cD - \frac{1}{2} \d_n g^{\a \b} - \frac{\cD}{2 \lambda |g|} g_{\m \rho} \epsilon^{\m \n} g^{\a \b} \Gamma^{\rho}_{n \n}  - \frac{\cD}{2\lambda |g|} \epsilon^{\b \n} g^{\a \g} \Gamma^n_{\g \n} = \cO_0.
\end{equation*}
For the first of the terms involving $\lambda$, we find
\begin{align}
    - \frac{\cD}{2 \lambda |g|} g_{\m \rho} \epsilon^{\m \n} g^{\a \b} \Gamma^{\rho}_{n \n} &= -\frac{\cD}{4 \lambda |g|} g^{\a \b} \epsilon^{\m \n} g_{\m \rho}  g^{\rho \g} \d_n g_{\g \n} \nonumber \\
    &= -\frac{\cD}{4 \lambda |g|} g^{\a \b} \epsilon^{\m \n} \delta^{\g}_{\m} \d_n g_{\g \n} \nonumber \\
    &= -\frac{\cD}{4 \lambda |g|} g^{\a \b} \epsilon^{\g \n} \d_n g_{\g \n} = 0,
\end{align}
owing to the symmetry of $\d_n g_{\g \n}$. We are thus left with
\begin{equation*}
    -2g^{\a \g} \Gamma^{\b}_{n \g} + g^{\a \b} \cD - \frac{1}{2} \d_n g^{\a \b}  - \frac{\cD}{2\lambda |g|} \epsilon^{\b \n} g^{\a \g} \Gamma^n_{\g \n} = \cO_0.
\end{equation*}
We now contract with $g_{\a \b}$ to obtain
\begin{align}\label{we almost have D}
    \cO_0 &= -2 \delta^{\g}_{\b} \Gamma^{\b}_{n \g} + 2 \cD - \frac{1}{2} g_{\a \b} \d_n g^{\a \b}  - \frac{\cD}{2\lambda |g|} \epsilon^{\b \n} \delta^{\g}_{\b} \Gamma^n_{\g \n} \nonumber \\
    &= -2 \Gamma^{\g}_{n \g} + 2 \cD - \frac{1}{2} g_{\a \b} \d_n g^{\a \b} - \frac{\cD}{2\lambda |g|} \epsilon^{\g \n} \Gamma^n_{\g \n} \nonumber \\
    &= -2 \Gamma^{\g}_{n \g} + 2 \cD - \frac{1}{2} g_{\a \b} \d_n g^{\a \b},
\end{align}
owing to the symmetry of $\Gamma^n_{\g \n}$ in its lower indices. Recalling the various expressions for $\cD$, Equation \eqref{we almost have D} reduces to
\begin{equation*}
    \cO_0 = 3 \cD,
\end{equation*}
so that the quantity $\cD$ is known in terms of $\cO_0$. Once this is the case, the right-hand-side of Equation \eqref{Normal degree 1} is known, and thus we may write
\begin{equation}\label{Normal degree 1 after D}
    \left( b_{0}^{nt} \right)^{\m} \left( \sigma_0 \right)_{\m} + b^{nn}_0 = \cO_0.
\end{equation}
Equation \eqref{LHS of degree 1} then gives us
\begin{equation}\label{normal reduced after D}
     -g^{\a \g} \Gamma^{\b}_{n \g} - \frac{1}{4} \d_n g^{\a \b} = \cO_0.
\end{equation}
Armed with knowledge of $\cD$, we now want to recover $\d_n g^{\a \b}$. To this end, we return to the tangential equation \eqref{Tangential degree 1}, which we can write as
\begin{equation}\label{Tangential degree 1 reduced after D take 1}
    \left( b^{tt}_{0} \right)^{\m}_{\ \n} \left( \sigma_0 \right)_{\m} + \left( b_0^{tn} \right)_{\n} = \cO_0.
\end{equation}
Using Equations \eqref{b_0^tt} and \eqref{b_0^tn}, Equation \eqref{Tangential degree 1 reduced after D take 1} becomes
\begin{equation*}
    i \omega_\a \omega_\b \omega_{\m} \left( g^{\a \b} \Gamma^{\m}_{n \n} + \frac{1}{2} \cD g^{\a \b} \delta^{\m}_{\n} - \frac{1}{4} \delta^{\m}_{\n} \d_ng^{\a \b} \right) + i \omega_{\a} g^{\a \g} \Gamma^n_{\g \n} = \cO_0.
\end{equation*}
Since $\cD$ is now known in terms of $\cO_0$, we can move it to the right-hand-side. We then combine terms using $g^{\a \b} \omega_{\a} \omega_{\b} = 1$ to get
\begin{equation*}
     i \omega_\a \omega_\b \omega_{\m} \left( g^{\a \b} \Gamma^{\m}_{n \n} - \frac{1}{4} \delta^{\m}_{\n} \d_ng^{\a \b} + g^{\b \m} g^{\a \g} \Gamma^n_{\g \n} \right) = \cO_0,
\end{equation*}
whence we can recover
\begin{equation*}
    g^{\a \b} \Gamma^{\m}_{n \n} - \frac{1}{4} \delta^{\m}_{\n} \d_ng^{\a \b} + g^{\b \m} g^{\a \g} \Gamma^n_{\g \n} = \cO_0.
\end{equation*}
Taking a trace over $(\m, \n)$, we obtain
\begin{align}
    \cO_0 &= g^{\a \b} \Gamma^{\m}_{n \m} - \frac{1}{2} \d_ng^{\a \b} + g^{\b \m} g^{\a \g} \Gamma^n_{\g \m} \nonumber \\
    &= -g^{\a \b} \cD - \frac{1}{2} \d_ng^{\a \b} + g^{\b \m} g^{\a \g} \Gamma^n_{\g \m}.
\end{align}
Again, since $\cD$ is known, we arrive at
\begin{equation}\label{tangential degree 1 reduced finally}
    -\frac{1}{2} \d_n g^{\a \b} + g^{\b \m} g^{\a \g} \Gamma^n_{\g \m} = \cO_0.
\end{equation}
In order to recover $\d_n g^{\a \b}$, we want to compare Equation \eqref{tangential degree 1 reduced finally} with \eqref{normal reduced after D}. Observe that
\begin{equation}\label{mystery term 1}
    g^{\b \m} g^{\a \g} \Gamma^n_{\g \m} = -\frac{1}{2} g^{\b \m} g^{\a \g} \d_n g_{\g \m}
\end{equation}
\begin{equation}\label{mystery term 2}
    -g^{\a \g} \Gamma^{\b}_{n \g} = - \frac{1}{2} g^{\a \g} g^{\b \m} \d_n g_{\m \g}
\end{equation}
So that the left-hand-sides of Equations \eqref{mystery term 1} and \eqref{mystery term 2} are equal. Therefore, we can subtract Equation \eqref{tangential degree 1 reduced finally} from Equation \eqref{normal reduced after D}, to obtain
\begin{equation*}
    \frac{1}{4} \d_n g^{\a \b} = \cO_0.
\end{equation*}
This completes the first step, in which we recover the first normal derivative of the metric at the boundary. We can hence obtain all expressions in $\cO_1$. We now consider the degree $-1$ part of Equations \eqref{symbol equations Sigma Lambda tang} and \eqref{symbol equations Sigma Lambda norm}, which become
\begin{equation}\label{Tangential O_1 with b}
    \left( b^{tt}_{-1} \right)^{\m}_{\ \n} \left( \sigma_{0} \right)_{\m} + \left( b^{tn}_{-1} \right)_{\n} = \cO_1,
\end{equation}
\begin{equation}\label{Normal O_1 with b}
    \left( b^{nt}_{-1} \right)^{\m} \left( \sigma_{0} \right)_{\m} +  b^{nn}_{-1} = \cO_1.
\end{equation}
Using the expression we derived for $b_{-1}$ modulo $\cO_1$, \eqref{b_-1 tt}--\eqref{b_-1 nn}, Equations \eqref{Tangential O_1 with b} and \eqref{Normal O_1 with b} become
\begin{align}
    \cO_1 &= \frac{1}{2 |\x|} i \omega_{\a} \omega_{\b} \omega_{\m} \bigg( g^{\a \b} \d_n \Gamma^{\m}_{n \n} + g^{\a \b}g^{\m \g} \d_n \Gamma^n_{\n \g} + g^{\a \b} \d_n \Gamma^{\m}_{n \n} + \frac{1}{2}  g^{\a \b}\delta^{\m}_{\n} \d_n \cD -\frac{1}{4} \delta^{\m}_{\n} \d_n^2 g^{\a \b} \nonumber \\
    &\ \ \ \ \ \ \ +  g^{\b \m} g^{\a \g} \d_n \Gamma^n_{\g \n} \bigg) \label{Tangential O_1 with omega}
\end{align}
\begin{align}\label{Normal O_1 with omega}
    \cO_1 &= \frac{1}{2 |\x|} \omega_{\a} \omega_{\b} \bigg( -g^{\a \b} \d_{n} \Gamma^{\g}_{n \g} + \frac{1}{2} g^{\a \b} \d_n \cD - \frac{1}{4} \d_n^2 g^{\a \b} - g^{\a \g} \d_n \Gamma^{\b}_{\g n} \bigg).
\end{align}
By contracting the coefficient of Equation \eqref{Normal O_1 with omega} with $g_{\a \b}$, we obtain
\begin{equation}\label{getting d_n D}
    4 \d_n \cD - \frac{1}{4} g_{\a \b} \d_n^2 g^{\a \b} = \cO_1.
\end{equation}
On the other hand, we have
\begin{equation*}
    \frac{1}{2} g_{\a \b} \d_n^2 g^{\a \b} = \frac{1}{2} \d_n (g_{\a \b} \d_n g^{\a \b}) + \cO_1 = \d_n \cD + \cO_1,
\end{equation*}
and therefore, Equation \eqref{getting d_n D} reduces to
\begin{equation*}
    \d_n \cD = \cO_1.
\end{equation*}
Now that $\d_n \cD$ is known in terms of $\cO_1$, the normal equation \eqref{Normal O_1 with omega} reduces to
\begin{align}\label{normal O_1 to be cancelled}
    \cO_1 &= -\frac{1}{4} \d_n^2 g^{\a \b} - g^{\a \g} \d_n \Gamma^{\b}_{n \g} \nonumber \\
    &= -\frac{1}{4} \d_n^2 g^{\a \b} - \d_n \left( g^{\a \g} \Gamma^{\b}_{n \g} \right) + \cO_1
\end{align}
Armed with this, we extract the coefficient of the tangential equation \eqref{Tangential O_1 with omega} and take a trace over $(\mu, \nu)$ to obtain
\begin{align}
    \cO_1 &= -2 g^{\a \b} \d_n \cD + g^{\a \b} g^{\m \g} \d_n \Gamma^n_{\g \m} + g^{\a \b} \d_n \cD - \frac{1}{2} \d_n^2 g^{\a \b} + g^{\b \m} g^{\a \g} \d_n \Gamma^n_{\g \m} \nonumber \\
    &= -2 g^{\a \b} \d_n \cD + g^{\a \b} \d_n \cD + g^{\a \b} \d_n \cD - \frac{1}{2} \d_n^2 g^{\a \b} + g^{\b \m} g^{\a \g} \d_n \Gamma^n_{\g \m} \nonumber \\
    &= -\frac{1}{2} \d_n^2 g^{\a \b} + g^{\b \m} g^{\a \g} \d_n \Gamma^n_{\g \m} \nonumber \\
    &= -\frac{1}{2} \d_n^2 g^{\a \b} + \d_n \left( g^{\b \m} g^{\a \g} \Gamma^n_{\g \m} \right) + \cO_1. \label{trace over tangential O_1}
\end{align}
By Equations \eqref{mystery term 1} and \eqref{mystery term 2}, we can once again subtract Equations \eqref{trace over tangential O_1} and \eqref{normal O_1 to be cancelled} to get
\begin{equation*}
    \frac{1}{4} \d_n^2 g^{\a \b} = \cO_1.
\end{equation*}
We have thus recovered all second-order normal derivatives of the metric at the boundary. We may now conduct the inductive step in analogy with above. Thus, suppose we have recovered $\cO_{|m|}$. Taking the degree $m$ part of Equations \eqref{symbol equations Sigma Lambda tang} and \eqref{symbol equations Sigma Lambda norm}, we obtain
\begin{equation}\label{tangential O_m with b}
    \left( b_m^{tt} \right)^{\m}_{\ \n} \left( \sigma_0 \right)_{\m} + \left( b_m^{tn} \right)_{\n} = \cO_{|m|},
\end{equation}
\begin{equation}\label{normal O_m with b}
    \left( b_m^{nt} \right)^{\m} \left( \sigma_0 \right)_{\m} + b_m^{nn} = \cO_{|m|}.
\end{equation}
Using Equations \eqref{b_-m tt}--\eqref{b_-m nn}, we obtain
\begin{align}
    \cO_{|m|} &= \frac{i(-1)^{|m|+1}}{2^{|m|} |\x|^{|m|}} \omega_{\a} \omega_{\b} \omega_{\m} \bigg( g^{\a \b} \d^{|m|}_n \Gamma^{\m}_{n \n} + g^{\a \b}g^{\m \g} \d^{|m|}_n \Gamma^n_{\n \g} + g^{\a \b} \d^{|m|}_n \Gamma^{\m}_{n \n} + \frac{1}{2}  g^{\a \b}\delta^{\m}_{\n} \d^{|m|}_n \cD \nonumber \\
    &\ \ \ \ \ \ \ -\frac{1}{4} \delta^{\m}_{\n} \d_n^{|m|+1} g^{\a \b} + g^{\a \g} g^{\b \m} \d_n^{|m|} \Gamma^n_{\g \n} \bigg) + \cO_{|m|}, \label{tangential with O_m omega}
\end{align}
\begin{equation}\label{normal with O_m omega}
    \cO_{|m|} = \frac{(-1)^{|m|+1}}{2^{|m|} |\x|^{|m|}} \omega_{\a} \omega_{\b} \left( -g^{\a \b} \d_{n}^{|m|} \Gamma^{\g}_{n \g} + \frac{1}{2} g^{\a \b} \d^{|m|}_n \cD - \frac{1}{4} \d_n^{|m|+1} g^{\a \b} - g^{\a \g} \d_n^{|m|} \Gamma^{\b}_{\g n} \right) + \cO_{|m|}.
\end{equation}
Contracting the coefficient of Equation \eqref{normal with O_m omega} with $g_{\a \b}$, we obtain 
\begin{align}
    \cO_{|m|} &= 4 \d_n^{|m|} \cD - \frac{1}{4} g_{\a \b} \d_n^{|m|+1}g^{\a \b} \nonumber \\
    &= 4 \d_n^{|m|} \cD - \frac{1}{4} \d_n^{|m|} \left( g_{\a \b} \d_n g^{\a \b} \right) + \cO_{|m|},
\end{align}
and so we are left with $\d_n^{|m|} \cD = \cO_{|m|}$. Equation \eqref{normal with O_m omega} then reduces to
\begin{equation}\label{last subtract normal}
    -\frac{1}{4} \d_n^{|m|+1} g^{\a \b} - g^{\a \g} \d_n^{|m|} \Gamma^{\b}_{\g n} = \cO_{|m|},
\end{equation}
while after taking a trace over $(\m \n)$, Equation \eqref{tangential O_m with b} reduces to
\begin{equation}\label{last subtract tangent}
    -\frac{1}{2} \d_n^{|m|+1} g^{\a \b} + g^{\a \g} g^{\b \m} \d_n^{|m|} \Gamma^n_{\g \m} = \cO_{|m|}.
\end{equation}
As before, we can add subtract Equations \eqref{last subtract normal} and \eqref{last subtract tangent} to obtain
\begin{equation*}
    \d_n^{|m|+1}g^{\a \b} = \cO_{|m|}.
\end{equation*}
This completes the inductive step, and thus also the proof that the symbol of $\Sigma$ determines the normal derivatives of $g$ at the boundary.
\end{proof}

\begin{remark}\label{remark: boundary determination for harmonic fields}
    Following up on Remark \ref{remark: introducing NT map for harmonic}, we note that the above proof is easily adapted to the case of harmonic fields, for which $\lambda = 0$ and the divergence-free equation is prescribed, rather than a consequence of the Beltrami field equation. Indeed, despite the presence of $\lambda^{-1}$ in the second line of Equation \eqref{Sigma matrix equation}, that line is easily seen to follow from $d^* u = 0$; therefore $\lambda^{-1}$ may be eliminated and the Beltrami field equation is in fact not needed there. The rest of the proof then applies with minimal changes to the case of the normal-to-tangential map for Harmonic fields.
\end{remark}

\section{Recovering a real-analytic simply connected manifold from the Beltrami normal-to-tangential map}\label{sec: recovering manifold}

In this section, we shall recover a compact simply connected real-analytic Riemannian $3$-manifold with boundary from its Beltrami normal-to-tangential map, thus proving Theorem \ref{main reconstruction theorem}. For this section, we shall assume that $(M_1,g_1)$ and $(M_2,g_2)$ are two such manifolds, whose boundaries are identified, and that their corresponding Beltrami normal-to-tangential maps $\Sigma_1$ and $\Sigma_2$ are equal. As the boundaries are identified, we will sometimes denote $\d M_1$ and $\d M_2$ simply by $\d M$.

By Theorem \ref{Main Theorem for Beltrami Calderon}, the Taylor series of the real-analytic metrics $g_1$ and $g_2$ are equal on $\d M$. Therefore, by attaching $\d M \times [-\delta_0,0]$ to $M_i$ along the boundary $\d M$ as in  , we can extend the manifold $M_i$ and the metric $g_i$ to a larger real-analytic Riemannian manifold with boundary $(\tilde{M}_i, \tilde{g}_i)$ such that $\tilde{M}_1 \setminus M_1$ is identified with $\tilde{M}_2 \setminus M_2$, and the extended metrics $\tilde{g}_1$ and $\tilde{g}_2$ agree in that region. We shall assume this setup for the remainder of the paper.

\subsection{The $b$-fields}\label{subsec: analogue of Greens functions}

To this end, drawing on the idea pursued in \cite{lassas2003} and \cite{Krupchyk2011}, we shall define an appropriate analogue of Green's forms for the boundary value problem \eqref{Beltrami source problem}, which depend analytically on a set of parameters, and which shall be used to embed our manifold into a suitable Sobolev space. Since any inhomogeneous term in the boundary value problem \eqref{Beltrami source problem} must be divergence-free in order to guarantee the existence of solutions, we cannot simply solve the problem with delta functions, and so there does not exist a Green's functions for the Beltrami problem. In lieu of delta functions, therefore, we use small ``loop sources", which we define momentarily. We call the corresponding solutions {\em $b$-fields}, as in the case where $\lambda = 0$, these correspond to magnetic fields arising from small loops of current. The parameters that these electric current loops, and hence the $b$-fields, depend on are therefore the location and orientation of the loops.

Let $S \tilde{M}_i$ denote the sphere bundle of $\tilde{M}_i$,
\begin{equation*}
    S\tilde{M}_i := \{ \omega \in T\tilde{M}_i \ | \ | \omega | = 1 \},
\end{equation*}
and let $\pi : S\tilde{M}_i \to \tilde{M}_i$ be the canonical projection. For $\varepsilon > 0$, let
\begin{equation*}
    \tilde{M}_i^{\varepsilon} := \{ x \in \tilde{M}_i \ | \ d(x, \d \tilde{M}_i) \geq \varepsilon \}.
\end{equation*}
For any $\omega \in S\tilde{M}_i^{\varepsilon}$ and $0 < \delta < \varepsilon$, we define the distributional current loop $J^i_{\omega, \delta}$ generated by $\omega$ with radius $\delta$ as follows. Let $\a_{\omega, \delta} : [0,2\pi] \to T_{\pi(\omega)}\tilde{M}_i$ parametrize an oriented circle of radius $\delta$ in the plane orthogonal to $\omega$. That is, if $(e_1,e_2,\omega)$ form an oriented orthonormal basis for $T_{\pi(\omega)}\tilde{M}_i$, then
\begin{equation*}
    \a_{\omega, \delta}(t) := \delta \left( (\cos{t}) e_1 + (\sin{t}) e_2 \right).
\end{equation*}
Let $\gamma_{\omega, \delta} : [0, 2\pi] \to \tilde{M}_i$ be the curve obtained by mapping $\a_{\omega, \delta}$ into $\tilde{M}_i$ via the exponential map,
\begin{equation*}
    \gamma_{\omega, \delta}(t) := \exp_{\pi(\omega)}\left( \a_{\omega, \delta}(t) \right).
\end{equation*}
We then define the distributional current loop $J^i_{\omega, \delta}$ by its action on a smooth $1$-form $A$,
\begin{equation}\label{current loop definition}
    J^i_{\omega, \delta}(A) := \int_{\gamma_{\omega, \delta}} \gamma^*_{\omega, \delta} A.
\end{equation}
We note the following properties of $J^i_{\omega, \delta}$. First, it does not depend on the orthonormal basis $(e_1,e_2,\omega)$. Second, $J^i_{\omega, \delta}$ is a distributional vector field with support and singular support on the image of $\gamma_{\omega,\delta}$, which we shall also denote by $\l(\omega, \delta)$. Third, the current loop $J^i_{\omega,\delta}$ is divergence-free in the sense of distributions. And finally, as one can easily verify using equation \eqref{current loop definition}, we have $J_{\omega, \delta} \in H^s$ for $s < -\frac{3}{2}$, and the map $(\omega, \delta) \mapsto J_{\omega, \delta}$ is $C^1$ into $H^{s'}$ for $s' < -\frac{3}{2} - 1$.

The existence theory for Beltrami fields given in \cite{enciso2015} generalizes to divergence-free distributions and allows us to solve the following boundary value problem, so long as $\lambda$ is not a Beltrami singular value of $\tilde{M}_i$, which we can arrange in the construction of the extended manifolds $\tilde{M}_i$ since $\lambda$ is not a Beltrami singular value of $M_1$ and $M_2$ by assumption. We thus solve
\begin{equation}\label{BVP for b-fields}
    \begin{cases}
        \curl_{g_i} {b^i_{\omega, \delta}} - \lambda b^i_{\omega, \delta} = J^i_{\omega, \delta} \ \text{in} \ \tilde{M}_i \\
        \tilde{\nu} \cdot b^i_{\omega, \delta} |_{\d \tilde{M}} = 0, \\
        \cH^t_{\tilde{M}_i}(b^i_{\omega, \delta}) = 0,
    \end{cases}
\end{equation}
for any $\omega \in SM_i^{\varepsilon}$ and $0 < \delta < \e$. Note that since $M_i$ is simply connected, we can arrange that $\tilde{M}_i$ be simply connected also, in which case the condition that the harmonic part be zero becomes superfluous. We will henceforth assume this is the case.

We note that by elliptic regularity, $b^i_{\omega, \delta}$ has singular support equal to that of $J_{\omega, \delta}$, namely the circle $\l(\omega, \delta)$ of radius $\delta$ generated by $\omega$. We also note that $b^i_{\omega, \delta}(x)$ is real-analytic in $x$ up to the boundary of $\tilde{M}_i$ \cite[Appendix A]{Enciso2023}, and in the parameters $(\omega, \delta)$, whenever $x$ is away from the singular set $\l(\omega, \delta)$. We also mention here a useful representation of the $b$-fields in terms of an integral kernel. To this end, we refer to \cite{enciso2018}, where an integral kernel for the curl operator on a manifold with boundary is constructed, and note that the methods applied therein may be straightforwardly generalized to the case of the Beltrami operator where $\lambda$ is non-zero. We thus obtain the following extension of \cite[Theorem 1.1]{enciso2018}: 

\begin{theorem}\label{theorem: existence of integral kernel}
    Let $(M,g)$ be as above. There exists an integral kernel $K_{\lambda}(x,y) \in \End(T_yM, T_xM)$ that is smooth outside the diagonal and satisfies
\begin{equation}\label{asymptotics of kernel}
    |K_{\lambda}(x,y)| \leq \frac{C(y)}{d(x,y)^2},
\end{equation}
    where $C(y)$ depends only on the distance from $y$ to the boundary, such that for any $H^k$ divergence-free vector field $v$ on $M$, the unique solution $u$ to the boundary value problem
    \begin{equation} \label{BVP for integral kernel}
        \begin{cases}
        \curl_{g} u - \lambda u = v \ \text{in} \ M, \\
        \nu \cdot u |_{\d M} = 0, \\
        \cH^t_{M}(u) = 0.
    \end{cases}
    \end{equation}
    can be represented as an integral of the form
    \begin{equation*}
        u(x) = \int_M K_{\lambda}(x,y) v(y)\, dy.
    \end{equation*}    
\end{theorem}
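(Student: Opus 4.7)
The plan is to follow the construction of the curl integral kernel in \cite[Theorem~1.1]{enciso2018}, treating the additional order-zero term $-\lambda \cdot \id$ as a lower-order perturbation. Because $\lambda \cdot \id$ is bounded and does not affect the principal symbol of $*d - \lambda$ or the natural boundary conditions on $u$, each step of the parametrix and boundary-correction construction of \cite{enciso2018} transfers with only cosmetic changes.

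The construction splits into three stages. \emph{Stage 1 (interior parametrix).} For $x$ and $y$ in a coordinate chart, freeze coefficients and invert the principal symbol of $*d - \lambda$, which coincides with that of $*d$, to produce a local fundamental solution with leading singularity $d(x,y)^{-2}$, smooth off the diagonal; the $\lambda$-term contributes only to subprincipal, more regular corrections. \emph{Stage 2 (boundary correction).} Globalize via a partition of unity and add an image-source term, obtained by reflecting $y$ across $\d M$ in boundary normal coordinates, to enforce $\n \hk u|_{\d M} = 0$. The image contribution has size controlled entirely by $\dist(y, \d M)$, which accounts for the stated $y$-dependence of $C(y)$ in \eqref{asymptotics of kernel}. \emph{Stage 3 (Fredholm correction).} Upgrade the parametrix to a genuine inverse by inverting the resulting smoothing error. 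Invertibility on $H^k$ divergence-free $1$-forms satisfying $\n \hk u|_{\d M} = 0$ and $\cH^t_M(u) = 0$ follows from the Fredholm alternative: the curl operator with compact resolvent on this space has discrete spectrum (by definition, a subset of the Beltrami singular values), and $\lambda$ is excluded from it by hypothesis, so $\ker(*d - \lambda)$ is trivial in the relevant space. The Fredholm correction modifies the kernel only by a smooth remainder, preserving the bound \eqref{asymptotics of kernel}.

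The main technical point requiring care is verifying that the parametrix-plus-correction operator sends a divergence-free $H^k$ input into the correct target class (divergence-free, normal boundary condition, vanishing tangent harmonic part), so that the Fredholm inversion in Stage 3 takes place in the right functional setting. Divergence-freeness of the output follows automatically from $(*d - \lambda)u = v$ when $\lambda \neq 0$, by \eqref{Beltrami fields are div free}; the normal boundary condition is built into the parametrix via the image source in Stage 2; and the vanishing of the tangent harmonic part follows from the simple-connectedness of the extended manifold $\tilde{M}_i$, under which the space $\cH^{t,1}(\tilde{M}_i)$ is trivial and the condition $\cH^t_{\tilde{M}_i}(u) = 0$ is automatic.
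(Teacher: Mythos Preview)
Your proposal is correct and takes essentially the same approach as the paper: the paper does not give a self-contained proof of this theorem, but simply refers to \cite[Theorem~1.1]{enciso2018} and asserts that ``the methods applied therein may be straightforwardly generalized to the case of the Beltrami operator where $\lambda$ is non-zero.'' Your three-stage outline (interior parametrix, boundary correction, Fredholm correction) is exactly the kind of elaboration the paper leaves implicit, and the observation that $-\lambda\cdot\id$ is a zeroth-order perturbation not affecting the principal symbol is the right reason why the construction transfers.

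One small remark: your handling of the condition $\cH^t_M(u)=0$ via simple-connectedness of $\tilde M_i$ is valid in the specific context where the theorem is applied in the paper (the extended manifolds are arranged to be simply connected), but the theorem as stated does not itself assume simple-connectedness, and the construction in \cite{enciso2018} handles the harmonic part by an explicit orthogonal projection rather than by assuming it away. This is a cosmetic point and does not affect correctness in context.
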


\begin{remark}
    As observed in \cite{enciso2018}, while Theorem \ref{theorem: existence of integral kernel} yields the existence of an integral kernel with the above properties, this kernel is easily seen to be non-unique. The solution to the boundary value problem \eqref{BVP for integral kernel}, however, {\em is} unique.
\end{remark}

With the integral kernel of Theorem \ref{theorem: existence of integral kernel}, the $b$-fields defined above as the solutions to \eqref{BVP for b-fields} can then be represented distributionally as
    \begin{equation}\label{integral representation of b-fields}
        b^i_{\omega, \delta}(x) := \int_{\gamma_{\omega, \delta}} K_{\lambda}(x, \gamma_{\omega, \delta}(t)) \gamma_{\omega,\delta}'(t)\, dt,
    \end{equation}
as one can easily verify. We can now use the representation \eqref{integral representation of b-fields} to deduce the following result, which shows that like the Green's functions for the Laplacian, the $b$-fields have a simple asymptotic form near singularities:

\begin{proposition}\label{proposition asymptotics}
    As $x \rightarrow \l(\omega, \delta)$, the $b$-fields have the asymptotic behaviour
    \begin{equation}\label{asymptotics}
        |b^i_{\omega, \delta}(x)| \sim \frac{C}{d(x, \l(\omega, \delta))}.
    \end{equation}
    In particular, the $b$-fields are $L^1_{\loc}$.
\end{proposition}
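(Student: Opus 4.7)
The plan is to work directly from the integral representation \eqref{integral representation of b-fields} together with the pointwise bound $|K_\lambda(x,y)| \leq C(y)/d(x,y)^2$ from Theorem \ref{theorem: existence of integral kernel}. Fix a point $x$ close to the loop $\l(\omega,\delta)$ and let $y_0 = \gamma_{\omega,\delta}(t_0)$ be a point on the loop realizing (up to a bounded factor) the distance $r := d(x,\l(\omega,\delta))$. Since the loop is a smooth embedded curve of class $C^\8$ (in fact real-analytic) and the parametrization $\gamma_{\omega,\delta}$ has nonvanishing velocity of order $\delta$, there is a constant $c>0$ such that $d(x,\gamma_{\omega,\delta}(t))^2 \geq c\bigl(r^2 + (t-t_0)^2\bigr)$ for $t$ in a small neighborhood of $t_0$ and away from this neighborhood the distance is bounded below by a positive constant (depending on $\delta$).

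Splitting the contour integral into a near-piece $|t-t_0|\leq \eta$ and a far-piece $|t-t_0|\geq \eta$, the far part contributes $O(1)$ uniformly as $r\to 0$. For the near part, the kernel bound gives
\begin{equation*}
    |b^i_{\omega,\delta}(x)| \lesssim \int_{-\eta}^{\eta} \frac{dt}{r^2 + t^2} + O(1) = \frac{1}{r}\Bigl[\arctan(t/r)\Bigr]_{-\eta}^{\eta} + O(1) \sim \frac{\pi}{r},
\end{equation*}
which yields the upper bound $|b^i_{\omega,\delta}(x)| \lesssim C/d(x,\l(\omega,\delta))$. To upgrade this to the asymptotic equivalence claimed in \eqref{asymptotics} one needs the leading-order singularity of $K_\lambda$ on the diagonal, which, by the standard parametrix construction recalled in the construction of the kernel in \cite{enciso2018}, is the same as that of the free-space Biot–Savart kernel in $\R^3$; the resulting leading term produces precisely the Biot–Savart integral over a small arc, whose dominant contribution as $x\to \l(\omega,\delta)$ is $\sim 1/d(x,\l(\omega,\delta))$. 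This Biot–Savart-type calculation is the main technical point, but it is essentially a local computation in normal coordinates around $y_0$, where the geometry contributes only subleading corrections.

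For the $L^1_{\loc}$ conclusion, observe that $\l(\omega,\delta)$ is a smooth closed one-dimensional submanifold of the three-manifold $\tilde M_i$. In a tubular neighborhood of $\l(\omega,\delta)$, Fermi (cylindrical) coordinates $(s,r,\th)$ give a volume form $dV = (1+O(r))\, r\, dr\, d\th\, ds$, so
\begin{equation*}
    \int_{\{d(x,\l)\leq \e\}} \frac{dV(x)}{d(x,\l(\omega,\delta))} \lesssim \int_0^{2\pi\delta}\!\!\int_0^{2\pi}\!\!\int_0^{\e} \frac{r}{r}\, dr\, d\th\, ds <\8.
\end{equation*}
Combined with the smoothness of $b^i_{\omega,\delta}$ away from $\l(\omega,\delta)$, this proves the local integrability.

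The main obstacle, as noted, is passing from the straightforward upper bound $\lesssim 1/r$ to the genuine asymptotic equivalence, since the kernel $K_\lambda$ of Theorem \ref{theorem: existence of integral kernel} is only known through the upper estimate \eqref{asymptotics of kernel} and is not unique. One has to extract its leading singularity from the parametrix construction of \cite{enciso2018}, verify that the zeroth-order $\lambda$-term contributes lower-order corrections, and match the resulting local model with the classical computation for a planar circular current loop, where the $1/d$ behaviour transverse to the wire is standard. Everything else — the localization around $t_0$, the tubular-coordinate estimate, and the integrability — is routine.
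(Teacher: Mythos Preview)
Your upper-bound argument is exactly the paper's proof: use the integral representation \eqref{integral representation of b-fields}, the kernel estimate \eqref{asymptotics of kernel}, split the contour into a near arc and a bounded far piece, and reduce to $\int_{-\epsilon}^{\epsilon} (r^2+t^2)^{-1}\,dt \lesssim 1/r$. The paper's proof in fact stops there --- it only derives the inequality $|b^i_{\omega,\delta}(x)| \leq C/d(x,\l(\omega,\delta))$ and does not justify the lower bound implicit in the symbol~$\sim$, nor does it spell out the $L^1_{\loc}$ integration in tubular coordinates. Your additional discussion of the leading Biot--Savart singularity via the parametrix of \cite{enciso2018}, and your Fermi-coordinate computation for local integrability, are thus supplements rather than deviations: they fill in what the paper leaves implicit (and the lower bound is genuinely needed later, e.g.\ in the injectivity argument of Proposition~\ref{Proposition: B is embedding}).
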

    
\begin{proof}
    We have \eqref{integral representation of b-fields} for $x \notin \l(\omega,\delta)$. So, choosing a coordinate $t$ along $\gamma_{\omega, \delta}$ centred on the point closest to $x$, we have, by the asymptotics \eqref{asymptotics of kernel},
    \begin{align}
        |b^i_{\omega, \delta}(x)| &\leq  \int_{\gamma_{\omega, \delta}} |K_{\lambda}(x, \gamma_{\omega, \delta}(t))|\, dt \nonumber \\
        &\leq \int_{\gamma_{\omega, \delta}} \frac{C_0}{d(x, \gamma_{\omega, \delta}(t))^2} \, dt \nonumber \\
        &\leq C_1 + \int_{-\epsilon}^{\epsilon} \frac{C_2}{d(x,\l(\omega, \delta))^2 + t^2} \, dt \nonumber \\
        &\leq \frac{C}{d(x,\l(\omega, \delta))},
    \end{align}
    yielding the desired asymptotics \eqref{asymptotics}.
\end{proof}

Having established the relevant properties of the $b$-fields, we now want to show, as in \cite[Lemma 3.4]{Krupchyk2011}, that the Beltrami normal-to-tangential map determines the $b$-field in the outer region of the extended manifold where the two metrics were constructed to agree, which shall be an important result in the next section. Thus, as above, fix $\delta > 0$ such that $\delta < \e$, let $U := \tilde{M}_i^{\varepsilon} \setm M_i$, and let $U_\delta := \{ p \in U \ | \ d(p, \d M_i) > \delta \}$. We have the following result:

\begin{proposition}\label{proposition: b-fields agree near boundary}
    We have $b^1_{\omega, \delta}(x) = b^2_{\omega, \delta}(x)$ for all $(x,\omega) \in U \times SU_{\delta}$ such that $x \notin \l(\omega, \delta)$.
\end{proposition}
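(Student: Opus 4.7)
The plan is to graft the outer values of $b^2_{\omega,\delta}$ onto $\tilde{M}_1$ to produce a competitor field $\hat{b}$ which solves the same boundary value problem as $b^1_{\omega,\delta}$ on $\tilde{M}_1$, and then to invoke the uniqueness statement of \Cref{Theorem on existence of Beltrami fields source}. Concretely, I define $\hat{b} := b^2_{\omega,\delta}$ on $\tilde{M}_1 \setminus M_1$ (using the identification of the extensions, on which the two metrics coincide) and take $\hat{b}|_{M_1}$ to be the unique Beltrami field on $M_1$ with normal datum $f := \nu \hk b^2_{\omega,\delta}|_{\d M}$ produced by \Cref{Theorem on existence of Beltrami fields}. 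The datum $f$ is smooth because $\l(\omega,\delta) \subset U_\delta$ sits away from $\d M$, and it has zero mean on $\d M$ by the divergence theorem applied to the divergence-free field $b^2_{\omega,\delta}|_{M_2}$, so $f \in C^\infty_*(\d M)$ as required.

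The decisive step is to verify that $\hat{b}$ is continuous as a $1$-form across $\d M$. The normal coefficients agree by the very definition of $f$. For the tangential parts, from the $M_1$-side the definition of $\Sigma_1$ gives $(\hat{b})_t|_{\d M} = \Sigma_1(f)$, whereas $b^2_{\omega,\delta}|_{M_2}$ is itself a Beltrami field on $M_2$ with normal component $f$ and, by simple-connectedness of $M_2$, vanishing harmonic part tangent to $\d M_2$, hence $(b^2_{\omega,\delta})_t|_{\d M} = \Sigma_2(f)$; the hypothesis $\Sigma_1 = \Sigma_2$ equates the two. A standard distributional computation then shows that a piecewise-smooth $1$-form whose tangential and normal components are both continuous across a hypersurface contributes no surface delta to $d$, so $*d\hat{b} - \lambda \hat{b} = J^1_{\omega,\delta}$ holds distributionally on all of $\tilde{M}_1$. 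Together with $\nu \hk \hat{b}|_{\d \tilde{M}_1} = 0$ inherited from the outer piece, this says $\hat{b}$ solves \eqref{BVP for b-fields}.

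Uniqueness for \eqref{BVP for b-fields} on the simply-connected extension $\tilde{M}_1$, with $\lambda$ not a Beltrami singular value of $\tilde{M}_1$, now yields $\hat{b} = b^1_{\omega,\delta}$ throughout $\tilde{M}_1$; restricting to $U$ and transporting via the identification gives $b^1_{\omega,\delta}|_U = b^2_{\omega,\delta}|_U$, which is the claim. The exclusion $x \notin \l(\omega,\delta)$ in the statement merely reflects that this is the common singular locus of the two fields, off which both are real-analytic by the regularity recalled after \eqref{BVP for b-fields}.

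The main obstacle I anticipate is the weak-solution verification across $\d M$: one must pin down the fact that, for piecewise-smooth $1$-forms, simultaneous continuity of the tangential and normal parts is exactly what kills any surface contribution to $d$ along $\d M$. This is the point where every ingredient enters at once: $\Sigma_1 = \Sigma_2$ supplies the tangential match, the divergence-free property of Beltrami fields gives the zero-mean condition on $f$ needed to apply the definition of $\Sigma_i$, and the simple-connectedness hypotheses on $M_1$, $M_2$, and $\tilde{M}_1$ remove the harmonic-part-tangent-to-boundary ambiguity on each of the three manifolds.
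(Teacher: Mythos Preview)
Your proposal is correct and follows essentially the same strategy as the paper's own proof: both arguments glue a Beltrami field on one inner manifold to the restriction of the other $b$-field on the collar, match the full trace across $\d M$ via $\Sigma_1=\Sigma_2$, and conclude by uniqueness on the simply connected extension. The only difference is a harmless swap of indices---the paper grafts $b^1_{\omega,\delta}|_U$ onto $M_2$ and invokes uniqueness on $\tilde M_2$, whereas you graft $b^2_{\omega,\delta}|_U$ onto $M_1$ and invoke uniqueness on $\tilde M_1$---and you spell out a couple of points (the zero-mean check on $f$, and why continuity of both tangential and normal parts kills any surface contribution to $d$) that the paper leaves implicit.
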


\begin{proof}
    Fix $\omega \in SU_{\delta}$, and let $\b$ be the unique vector field on $M_2$ solving the boundary value problem
    \begin{equation*}
        \begin{cases}
            \curl_{g_2}{\b} - \lambda \b = 0, \\
            \n \cdot \b |_{\d M} = \n \cdot b^1_{\omega} |_{\d M}, \\
            \cH^t_{M_2}(\b) = 0.
        \end{cases}
    \end{equation*}
    Note that since $M_i$ is simply connected, \eqref{BVP for b-fields} implies that $b^1_{\omega, \delta}$ is the unique solution $\gamma$ to
    \begin{equation}\label{BVP for gamma}
    \begin{cases}
        \curl_{g_1} \gamma - \lambda \gamma = 0, \\
        \n \cdot \gamma |_{\d M} =  \n \cdot b^1_{\omega, \delta} |_{\d M}, \\
        \cH^t_{M_1}(\gamma) = 0, 
    \end{cases}
\end{equation}
inside $M_1$. Therefore, since $\Sigma_1 = \Sigma_2$, we have
\begin{equation*}
    \left( \b|_{\d M} \right)^t = \Sigma_2 \left( \n \cdot \b|_{\d M} \right) = \Sigma_2 \left( \n \cdot b^1_{\omega, \delta}|_{\d M} \right) = \Sigma_1 \left( \n \cdot b^1_{\omega, \delta}|_{\d M} \right) = \left( b^1_{\omega, \delta} |_{\d M}\right)^t,
\end{equation*}
so that in fact, we have $\b|_{\d M} = b^1_{\omega, \delta} |_{\d M}$. We may thus define a vector field $\tilde{\b}$ on $\tilde{M}_2$ by
\begin{equation*}
    \tilde{\b} := \begin{cases}
        \b & \mathrm{in} \ M_2, \\ b^1_{\omega, \delta} & \mathrm{in} \ U,
    \end{cases}
\end{equation*}
which by construction satisfies the boundary value problem
\begin{equation}\label{BVP for beta tilde}
    \begin{cases}
        \curl_{g_2} \tilde{\b} - \lambda \tilde{\b} = J^2_{\omega, \delta}, \\
        \n \cdot \tilde{\b} |_{\d \tilde{M}} = 0, \\
        \cH^t_{\tilde{M}_2}(\tilde{\b}) = 0,
    \end{cases}
\end{equation}
since the extended metrics $\tilde{g}_1$ and $\tilde{g}_2$ agree in $U$. Therefore, by the uniqueness of solutions to \eqref{BVP for beta tilde}, it follows that $\tilde{\b} = b^2_{\omega, \delta}$ in $\tilde{M}_2$. In particular, we have that $b^2_{\omega, \delta}$ coincides with $b^1_{\omega, \delta}$ in $U$.
\end{proof}

\begin{remark}\label{remark on simply connectedness}
    Note that if $\tilde{M}_1$ is not simply connected, then the vanishing of the harmonic part of $b^1_{\omega, \delta}$ on $\tilde{M}_1$ does not guarantee the vanishing of its harmonic part when restricted to $M_1$, and so we cannot conclude that $b^1_{\omega, \delta}$ is the unique solution to \eqref{BVP for gamma} in that case. Similarly, if $M_2$ is not simply connected, then the vanishing of $\cH^t_{M_2}(\b)$ does not guarantee the vanishing of $\cH^t_{\tilde{M}_2}(\tilde{\b})$, and so we would not be able to conclude that $\tilde{b}$ coincides with $b^2_{\omega, \delta}$. One sees therefore why we need the manifolds to be simply connected in order to match the Cauchy data, and conclude that the Beltrami normal-to-tangential map determines the $b$-fields in the extended region. 
\end{remark}

\subsection{Embedding $\tilde{M}_i^{\varepsilon}$ into a Sobolev space and constructing the isometry}\label{subsec: embedding M into H^s}

Having established the necessary properties of the $b$-fields, we now proceed with the approach given in \cite{Krupchyk2011}, using the $b$-fields in lieu of the Green's forms and emphasizing the necessary adaptations. For the remainder of this paper, we shall fix a loop radius $\delta > 0$ with $\delta < \e$ as above, and henceforth omit the dependence on $\delta$ in our notation.

We begin by choosing an open set $\tilde{U} \cc U_{\delta}$. For some fixed $s < -\frac{3}{2}$, we can define the maps
\begin{align}\label{definition of embedding maps}
\begin{split}
    \sB_i : S\tilde{M}_i^{\e} &\to H^s(\tilde{U}, T\tilde{U}) \\
    \omega &\mapsto b^i_{\omega}|_{\tilde{U}}.
\end{split}
\end{align}
We will eventually show that $\sB_1$ and $\sB_2$ are embeddings with the same image. This will imply that $S \tilde{M}_1^{\e}$ is isometric to $S \tilde{M}_2^{\e}$, from which the isometry of $M_1$ and $M_2$ shall follow. We have

\begin{proposition}
    For $s < -\frac{3}{2}$, the map $\sB_i$ is a $C^1$-map into $H^s(\tilde{U}, T\tilde{U})$.
\end{proposition}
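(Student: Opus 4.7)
The strategy is to factor $\sB_i$ through the solution operator of the inhomogeneous Beltrami problem. Specifically, write $\sB_i = R \circ T \circ L$, where
\begin{itemize}
\item $L : S\tilde{M}_i^{\e} \to \cD'(\tilde{M}_i, T\tilde{M}_i)$ sends $\omega$ to the distributional current loop $J^i_{\omega,\delta}$;
\item $T$ is the solution operator of \eqref{Beltrami source problem} on $\tilde{M}_i$, assigning to a divergence-free source $w$ the unique solution of \eqref{BVP for b-fields} with $w$ in place of $J^i_{\omega,\delta}$ (granted by Theorem \ref{Theorem on existence of Beltrami fields source}, together with the observation that $\tilde{M}_i$ can be taken simply connected so that the tangential harmonic condition is vacuous);
\item $R$ is the restriction to $\tilde U$.
\end{itemize}
The excerpt has already recorded that $L$ is $C^1$ into $H^{s'}(\tilde{M}_i, T\tilde{M}_i)$ for every $s' < -\tfrac52$, while restriction is a bounded linear map between the corresponding Sobolev spaces over $\tilde{M}_i$ and $\tilde U$. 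Thus the proposition is reduced to the assertion that $T$ extends to a bounded linear operator from divergence-free $1$-forms in $H^{s'}$ to $H^{s'+1}$ for $s' < -\tfrac52$. Granting this, $\sB_i = R \circ T \circ L$ is $C^1$ into $H^{s'+1} = H^s$ for any $s < -\tfrac32$.

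The only substantive step is therefore the extension of $T$ to negative Sobolev indices. The most direct route uses the integral kernel of Theorem \ref{theorem: existence of integral kernel}: the bound $|K_{\lambda}(x,y)| \lesssim d(x,y)^{-2}$ on a three-dimensional manifold matches the size of the Schwartz kernel of an order $-1$ pseudodifferential operator, so standard mapping properties give $T : H^{s'}_{\mathrm{div}} \to H^{s'+1}$ for every real $s'$. Alternatively, one can proceed by duality: as recorded in the footnote to Theorem \ref{Theorem on existence of Beltrami fields source}, the curl operator with tangency boundary condition is self-adjoint on a dense subspace of $L^2$ divergence-free $1$-forms on the simply connected manifold $\tilde{M}_i$, so $T$ is self-adjoint on $L^2_{\mathrm{div}}$; Theorem \ref{Theorem on existence of Beltrami fields source} gives the positive-scale estimate $T : H^{k}_{\mathrm{div}} \to H^{k+1}$ for $k \ge 0$, and dualizing yields the extension to all $s' < 0$.

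With the Sobolev-scale extension of $T$ in hand, the conclusion is formal: $\sB_i$ is the composition of a $C^1$ map with two bounded linear maps, hence $C^1$. The main obstacle is precisely the negative-index extension of the solution operator; the rest is bookkeeping on the Sobolev scale, with the loss of exactly two derivatives (one from the distributional nature of $J^i_{\omega,\delta}$, and one from the extra derivative incurred when differentiating in $\omega$) accounting for the threshold $s < -\tfrac32$.
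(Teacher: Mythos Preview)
Your proposal is correct and follows essentially the same approach as the paper: both arguments reduce the $C^1$-regularity of $\sB_i$ to the fact that $\omega \mapsto J^i_{\omega,\delta}$ is $C^1$ into $H^{s'}$ for $s'<-\tfrac52$ together with the one-derivative gain of the Beltrami solution operator. The paper phrases this by differentiating the boundary value problem and invoking elliptic regularity in a single sentence, whereas you make the factorization $\sB_i = R \circ T \circ L$ explicit and supply two arguments (kernel bounds, duality) for the negative-index mapping of $T$; note that your kernel-size heuristic alone does not rigorously identify $T$ as order $-1$, so the duality route is the one that actually carries the weight.
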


\begin{proof}
    We want to show that the map $\omega \mapsto b^i_{\omega}$ is $C^1$ into $H^s$. To this end, note that by \eqref{BVP for b-fields} the derivatives of the $b$-fields with respect to $\omega$ satisfy the boundary value problem
    \begin{equation}\label{BVP for omega derivatives}
    \begin{cases}
        \curl_{g_i} {(\d_{\omega} b^i_{\omega})} - \lambda\, (\d_{\omega}b^i_{\omega}) = \d_{\omega}J^i_{\omega} \ \ \text{in} \ \ \tilde{M}_i \\
        \tilde{\nu} \cdot (\d_{\omega} b^i_{\omega}) |_{\d \tilde{M}} = 0.
    \end{cases}
\end{equation}
Since $\d_{\omega} J^i_{\omega}$ is easily seen to be in $H^{s'}$ for $s' < -\frac{5}{2}$, it follows by elliptic regularity that $\d_{\omega} b^i_{\omega} \in H^s$ for $s < -\frac{3}{2}$. Since $\d_{\omega}J^i_{\omega}$ depends continuously on $\omega$, so does $\d_{\omega} b^i_{\omega}$.
\end{proof}

Next we prove

\begin{proposition}\label{Proposition: B is embedding}
    The map $\sB_i$ is an embedding, and is real-analytic on $\tilde{M}_i^{\e} \setm V$, where $V$ is adequately chosen and satisfies $\tilde{U} \cc V \cc U_{\delta}$.
\end{proposition}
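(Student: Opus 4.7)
The plan is to prove the proposition in three steps — real-analyticity on $S\tilde{M}_i^\e \setm \pi^{-1}(\overline V)$ for a suitably chosen $V$, global injectivity, and upgrading to a topological embedding via compactness. The core technical input is the joint real-analytic dependence of $b^i_\omega$ on $(x,\omega)$ away from the singular loop, which I would extract from the integral representation \eqref{integral representation of b-fields} together with real-analyticity of the kernel $K_\lambda$ off the diagonal.

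\textbf{Real-analyticity.} I would choose $V$ to be an open set with $\tilde U \cc V \cc U_\delta$ and $\dist(\tilde U, \tilde M_i \setm V) > \delta$; this is possible because $\tilde U \cc U_\delta$ leaves a buffer, and if not one can shrink $\delta$ at the outset (the arguments of this section work for any sufficiently small loop radius). For $\omega \in S\tilde M_i^\e$ with $\pi(\omega) \in \tilde M_i^\e \setm V$, the loop $\l(\omega)$ then lies at positive distance from $\tilde U$. Since $\tilde g_i$ is real-analytic, real-analytic elliptic regularity applied to the Beltrami operator with tangency boundary condition upgrades the smoothness of $K_\lambda$ in Theorem~\ref{theorem: existence of integral kernel} to joint real-analyticity off the diagonal. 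Combined with the real-analytic dependence of $\gamma_{\omega,\delta}(t) = \exp_{\pi(\omega)}(\a_{\omega,\delta}(t))$ on $\omega$, the integral representation \eqref{integral representation of b-fields} then makes $(x,\omega) \mapsto b^i_\omega(x)$ jointly real-analytic on $\tilde U \times \pi^{-1}(\tilde M_i^\e \setm V)$, so that $\omega \mapsto b^i_\omega|_{\tilde U}$ is real-analytic into $C^\omega(\tilde U, T\tilde U) \hookrightarrow H^s(\tilde U, T\tilde U)$.

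\textbf{Injectivity and embedding.} Suppose $\sB_i(\omega_1) = \sB_i(\omega_2)$. Each $b^i_{\omega_j}$ is real-analytic on $\tilde M_i \setm \l(\omega_j)$ by real-analytic elliptic regularity for the Hodge--Helmholtz equation \eqref{Hodge Laplacian of u}, so $b^i_{\omega_1} - b^i_{\omega_2}$ is real-analytic on the connected set $\tilde M_i \setm (\l(\omega_1) \cup \l(\omega_2))$ (two one-dimensional loops do not disconnect a $3$-manifold), and its vanishing on the open subset $\tilde U$ thus propagates to the entire complement. If $\omega_1 \neq \omega_2$ and $\l(\omega_1) \neq \l(\omega_2)$ as subsets of $\tilde M_i$, pick $p \in \l(\omega_1) \setm \l(\omega_2)$: near $p$, $b^i_{\omega_2}$ stays real-analytic while $b^i_{\omega_1}$ blows up like $C/d(\cdot, \l(\omega_1))$ by Proposition~\ref{proposition asymptotics}, a contradiction. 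The only remaining case is $\omega_1 = -\omega_2$ (same base point, same perpendicular plane, opposite orientation), in which case $J^i_{-\omega_2} = -J^i_{\omega_2}$ forces $b^i_{\omega_1} = -b^i_{\omega_2}$ and hence $b^i_{\omega_1}|_{\tilde U} = 0$; unique continuation together with the local integrability of $b^i_{\omega_1}$ (Proposition~\ref{proposition asymptotics}) then implies $b^i_{\omega_1} = 0$ as a distribution on $\tilde M_i$, contradicting $(\curl - \lambda) b^i_{\omega_1} = J^i_{\omega_1} \neq 0$. Finally, the compactness of $\tilde M_i$ makes $S\tilde M_i^\e$ compact, and an injective continuous map from a compact space into a Hausdorff space is automatically a topological embedding.

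\textbf{Main obstacle.} The main delicate point is the joint real-analyticity of $K_\lambda$ off the diagonal (and up to $\d \tilde M_i$ in $x$), which is not explicitly stated in Theorem~\ref{theorem: existence of integral kernel} but should follow from the real-analyticity of $\tilde g_i$ and standard real-analytic elliptic regularity for systems with tangency boundary conditions, in the spirit of \cite[Appendix A]{Enciso2023}. Beyond this, the structure of the proof mirrors the Green's-form embedding of \cite{Krupchyk2011}, with Proposition~\ref{proposition asymptotics} supplying the asymptotic blow-up on the singular loop that takes the place of the Green's form's pole in earlier arguments.
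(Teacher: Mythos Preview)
Your injectivity argument is sound and in fact handles the antipodal case $\omega_1=-\omega_2$ more carefully than the paper, which simply asserts that $\l(\omega)=\l(\omega')$ forces $\omega=\omega'$. Your real-analyticity argument is also more explicit than the paper's one-line appeal to analyticity of the $b$-fields away from singularities.

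However, there is a genuine gap: you prove only that $\sB_i$ is a \emph{topological} embedding (injective, continuous, compact domain, Hausdorff target), whereas the paper establishes that $\sB_i$ is an \emph{injective immersion}, i.e.\ that the differential $D\sB_i(\omega_0)$ is injective at every point. This is not a matter of taste: the immersion property is used essentially in the sequel. In the proof of Theorem~\ref{isometry of sphere bundles theorem}, the subspace $\sV = D\sB_1(\omega_1)(T_{\omega_1}S\tilde M_1^\e)$ must have dimension $5$, and then the inverse function theorem is applied to $P\sB_i$; both steps fail without injectivity of $D\sB_i$. The paper's argument for this is parallel to the injectivity step: if $v^k\partial_{\omega^k}b^i_\omega|_{\omega_0}$ vanishes on $\tilde U$, analytic continuation extends this to $\tilde M_i^\e\setminus\l(\omega_0)$, and one derives a contradiction from the blow-up of $\partial_\omega b^i_\omega$ as $x\to\l(\omega_0)$ (which follows by differentiating the asymptotics). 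You should add this step.
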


\begin{proof}
    The analyticity of $\sB_i$ away from $\tilde{U}$ follows from the analyticity of the $b$-fields away from singularities. Now, since $S \tilde{M}_i^{\e}$ is compact, to prove that $\sB_i$ is an embedding it suffices to show that is is an injective immersion. We first show that $\sB_i$ is injective. Indeed, suppose that $\sB_i(\omega) = \sB_i(\omega')$ for $\omega \neq \omega'$. Then we have $b^i_{\omega}(x) = b^i_{\omega'}(x)$ for all $x \in \tilde{U}$ away from singularities. But by analyticity, this must hold for all $x \in \tilde{M}_i^{\e} \setminus \left( \l(\omega) \cup \l(\omega') \right)$. If the singular circles $\l(\omega)$ and $\l(\omega')$ are not equal, then this contradicts the asymptotics of the $b$-fields. Therefore $\l(\omega) = \l(\omega')$, from which it easily follows that we must have $\omega = \omega'$. Hence, $\sB_i$ is injective.

    To show that $\sB_i$ is an immersion, suppose there exists $\omega_0 \in S \tilde{M}_i^{\e}$ for which $D \sB_i(\omega_0)$ is not injective. That is, there exists $v \in T_{\omega_0} S \tilde{M}^{\e}_i$ such that
    \begin{equation*}
        0 = \left( D \sB_i \right)_{\omega_0} v = v^{k} \d_{\omega^k} b_{\omega} |_{\omega_0}
    \end{equation*}
    in local coordinates about $\omega_0$. But this would imply that
    \begin{equation*}
        0 = v^k \d_{\omega^k} b_{\omega}|_{\omega_0}(x) 
    \end{equation*}
    for all $x \in \tilde{U} \setm \l(\omega_0)$. By real-analyticity, this would hold for all $x \in \tilde{M}^{\e}_i \setm \l(\omega_0)$, contradicting the asymptotics as $x \mapsto \l(\omega_0)$. Thus, $\sB_i$ is an injective immersion, hence an embedding.
\end{proof}

We are now ready to prove the main result of this section (c.f. \cite[Theorem 3.7]{Krupchyk2011}), from which Theorem \ref{main reconstruction theorem} will follow:

\begin{theorem}\label{isometry of sphere bundles theorem}
    Suppose that the $b$-fields satisfy
    \begin{equation*}
        b^1_{\omega}(x) = b^2_{\omega}(x)
    \end{equation*}
    for all $(x,\omega) \in U \times S U_{\delta}$ such that $x \notin \l(\omega, \delta)$. Then
    \begin{equation*}
        \sB_1(S \tilde{M}_1^{\e}) = \sB_2(S \tilde{M}_2^{\e}) \sub H^s(\tilde{U}, T\tilde{U}),
    \end{equation*}
    and $\sB_2^{-1} \sB_1 : S \tilde{M}_1^{\e} \to S \tilde{M}_2^{\e}$ is an isometry.
\end{theorem}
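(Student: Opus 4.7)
The plan is to adapt the strategy of \cite[Theorem 3.7]{Krupchyk2011}, with the $b$-fields replacing Green's forms. By hypothesis $\sB_1 = \sB_2$ on $SU_\delta$, and the task is to propagate this identification to a global bijection $\Phi := \sB_2^{-1}\sB_1 : S\tilde{M}_1^\e \to S\tilde{M}_2^\e$ and then promote it to an isometry.

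First I would establish the equality of images $\sB_1(S\tilde{M}_1^\e) = \sB_2(S\tilde{M}_2^\e)$ via a connectedness argument. Let $E := \{\omega \in S\tilde{M}_1^\e : \sB_1(\omega) \in \sB_2(S\tilde{M}_2^\e)\}$. This set is non-empty as it contains $SU_\delta$ by hypothesis. Closedness of $E$ follows from the compactness of $S\tilde{M}_2^\e$ and the continuity of $\sB_2$. For openness at $\omega\in E$, let $\omega_2 := \sB_2^{-1}(\sB_1(\omega))$. If $\omega_2 \notin SV$, where $V$ is as in Proposition \ref{Proposition: B is embedding}, then $\sB_2$ is a real-analytic immersion near $\omega_2$ and admits a local left-inverse; composing with $\sB_1$ produces a map carrying a neighborhood of $\omega$ into $S\tilde{M}_2^\e$ through $\sB_2$. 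If instead $\omega_2\in SV\sub SU_\delta$, then the hypothesis $\sB_1 = \sB_2$ on $SU_\delta$ together with injectivity of $\sB_1$ forces $\omega=\omega_2\in SU_\delta\sub E$, so $\omega$ lies in the interior of $E$ automatically. Since $S\tilde{M}_1^\e$ is connected (as $\tilde{M}_i^\e$ is connected with connected sphere fibres), $E = S\tilde{M}_1^\e$; the reverse inclusion is symmetric.

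Second, with images coinciding, $\Phi$ is a well-defined continuous bijection. Local expressions of the form $\Phi = \sB_2^{-1}\circ \sB_1$, with $\sB_2^{-1}$ constructed from the real-analytic immersion structure away from $SV$, show that $\Phi$ is a real-analytic diffeomorphism away from a small set, extended by continuity and analyticity propagation. To identify $\Phi$ as an isometry of sphere bundles, the crucial observation is that on $SU_\delta$ the map $\Phi$ is the identity (interpreting $SU_\delta$ as a common subset of both $S\tilde{M}_i^\e$ via the identification of extended metrics on $U$). By analytic continuation of $\Phi$ from this open set, together with the asymptotic rigidity from Proposition \ref{proposition asymptotics}, the singular loops $\l_1(\omega)\sub \tilde{M}_1^\e$ and $\l_2(\Phi(\omega))\sub\tilde{M}_2^\e$ must correspond under the identification induced by the agreement of the $b$-fields. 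Since each $\omega$ is determined by the centre $\pi(\omega)$ of $\l_i(\omega)$ together with an orientation of the orthogonal plane, this correspondence forces $\Phi$ to map fibres to fibres and descends to a real-analytic diffeomorphism $\phi:\tilde{M}_1^\e\to\tilde{M}_2^\e$ with $\phi|_U=\id$. That $\phi^*g_2=g_1$ then follows from the uniqueness in Theorem \ref{Theorem on existence of Beltrami fields source} applied on both sides: the pullback metric $\phi^*g_2$ produces $b$-fields on $\tilde{M}_1^\e$ matching the original ones in the common analytic region, hence globally by real-analyticity.

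The main obstacle is the absence of symmetry in $b^i_\omega(x)$ between the sphere-bundle parameter $\omega$ and the spatial variable $x$, in contrast with a Green's function $G(x,y)$. As a consequence, $\Phi$ is naturally defined on sphere bundles rather than directly on the base manifolds, and the descent of $\Phi$ to an isometry $\phi$ of the bases requires the geometric information carried by the singular loops $\l_i(\omega)$, which encode both a base point and an orientation. Care is also needed in handling the analytic continuation near the set $V$ where $\sB_i$ fails to be analytic, and in verifying that the local immersion arguments can be upgraded consistently across the compact sphere bundle; these difficulties, together with the loop-source nature of the $b$-fields, are also what precludes a local-data version of the statement, as noted in Remark \ref{remark: no local theorem}.
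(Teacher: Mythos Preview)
Your connectedness argument has a genuine gap in the openness step. When $\omega\in E$ with $\omega_2=\sB_2^{-1}\sB_1(\omega)$, you invoke a local left-inverse $L$ of the immersion $\sB_2$ and compose with $\sB_1$. This produces a map $L\circ\sB_1$ from a neighbourhood of $\omega$ into $S\tilde{M}_2^\e$, but it does \emph{not} show that $\sB_1(\omega')=\sB_2\bigl(L\sB_1(\omega')\bigr)$ for $\omega'$ near $\omega$, which is what membership in $E$ requires. The images $\sB_1(S\tilde{M}_1^\e)$ and $\sB_2(S\tilde{M}_2^\e)$ are $5$-dimensional submanifolds of the infinite-dimensional space $H^s(\tilde U,T\tilde U)$; meeting at a single point says nothing about coinciding nearby. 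The paper handles exactly this by first proving that the tangent spaces $D\sB_1(\omega_1)(T_{\omega_1}S\tilde M_1^\e)$ and $D\sB_2(\omega_2)(T_{\omega_2}S\tilde M_2^\e)$ agree (its Lemma on tangent spaces), then writing both images locally as graphs over this common tangent plane via an orthogonal projection $P$ and the inverse function theorem, and finally using real-analytic continuation from the region where $\hat\Phi$ is already known to be a local isometry to conclude the two graphs are equal. Without something of this sort your openness claim does not follow.

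A second gap: your case split is on whether $\omega_2\in SV$, but you never rule out $\omega_2\in\partial S\tilde M_2^\e$. If the corresponding point lies on the boundary, even a genuine local inverse would not give openness. The paper addresses precisely this obstruction in its sequence lemma: if a limit point $\xi_0$ were on $\partial S\tilde M_2^\e$, then $\xi_0\in SU_\delta$ and the hypothesis $b^1_\omega=b^2_\omega$ on $U\times SU_\delta$ together with the asymptotics of Proposition~\ref{proposition asymptotics} yields a contradiction. Finally, your isometry argument is too loose: agreement of $b$-fields under $\phi^*g_2$ does not by itself force $\phi^*g_2=g_1$. The paper instead builds the local-isometry condition into the definition of the maximal set $D_1$ and propagates it by analytic continuation of the real-analytic map $\hat\Phi$ from $S\tilde U$, where it is the identity between isometric metrics.
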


\begin{proof}
To prove this, following \cite{Krupchyk2011}, we introduce the following sets for a small $\epsilon_0 > 0$:
\begin{equation*}
    N(\epsilon_0) = \{ x \in \tilde{M}^{\e}_1 \ : \ d(x, \d \tilde{M}^{\e}_1) \leq \epsilon_0 \},
\end{equation*}
\begin{equation*}
    C(\epsilon_0) = \{ x \in \tilde{M}^{\e}_1 \ : \ d(x, \d \tilde{M}^{\e}_1) > \epsilon_0 \},
\end{equation*}
and we can further assume that $C(\epsilon_0)$ is connected. Let us then take some $\omega_0 \in SC(\epsilon_0)$, and let $V_1 \sub SC(\epsilon_0)$ be the largest open set containing $\omega_0$ such that $\sB_1(\omega) \in \sB_2(S \tilde{M}^{\e}_2)$ for all $\omega \in V_1$. We know that $V_1$ is non-empty because the $b$-fields agree in $U \times SU_{\delta}$. Since $\sB_2$ is injective by Proposition \ref{Proposition: B is embedding}, we can define the map
\begin{equation*}
    \hat{\Phi} = \sB_2^{-1} \sB_1 : V_1 \to S\tilde{M}_2^{\e}.
\end{equation*}
Let $D_1 \sub V_1$ be the largest connected open set such that $\hat{\Phi}$ is a real-analytic local diffeomorphism and local isometry on $D_1$. As in \cite{Krupchyk2011}, we want to show that $SN(\epsilon_0) \cup D_1 = S \tilde{M}^{\e}_1$ by contradiction. Thus, let $\omega_1$ be the point in $S \tilde{M}_1^{\e} \setm (SN(\epsilon_0) \cup D_1)$ closest to $\omega_0$, so that $\omega_1 \in \d D_1$. We now want to prove the following analogue of \cite[Lemma 3.8]{Krupchyk2011}:

\begin{lemma}\label{lemma sequences}
    There exist $\omega_2$ in the interior of $S \tilde{M}^{\e}_2$ such that
    \begin{equation*}
        \sB_2(\omega_2) = \sB_1(\omega_1) \in H^s(\tilde{U}, T\tilde{U}),
    \end{equation*}
    and a sequence $(\omega_j) \sub D_1$ such that $\omega_j \to \omega_1$ and $\hat{\Phi}(\omega_j) \to \omega_2$.
\end{lemma}

\begin{proof}
    Since $\omega_1 \in \d D_1$, we can take any sequence $(\omega_j) \sub D_1$ such that $\omega_j \to \omega_1$. Since $\hat{\Phi}$ is a local isometry on $D_1$, there is a sequence $(\x_j) \sub S \tilde{M}_2^{\e}$ such that $\sB_1(\omega_j) = \sB_2(\x_j)$. By compactness, $(\xi_j)$ has a convergent subsequence. If it has a convergent subsequence that converges to a point in the interior of $S \tilde{M}_2^{\e}$, then we are done. So let us suppose that every convergent subsequence of $(\xi_j)$ converges to a point $\x_0 \in \d S\tilde{M}_2^{\e}$. We then would have, for fixed $y \in \tilde{U}$,
    \begin{equation*}
        \sB_1(\omega_j)(y) = \sB_2(\x_j)(y) \to \sB_2(\x_0)(y) = b^2_{\x_0}(y).
    \end{equation*}
    On the other hand,
    \begin{equation*}
        \sB_1(\omega_j)(y) \to \sB_1(\omega_1)(y) = b^1_{\omega_1}(y).
    \end{equation*}
    Since $\x_0$ is on the boundary of $S \tilde{M}_2^{\e}$, Proposition \ref{proposition: b-fields agree near boundary} and the above yield
    \begin{equation}\label{lemma sequences important equation}
        b^1_{\omega_1}(y) = b^2_{\x_0}(y) = b^1_{\x_0}(y).
    \end{equation}
    This is true for any $y \in \tilde{U}$ away from singularities, and so therefore, by analytic continuation, it must hold everywhere.  But $\omega_1$ is in the interior of $S \tilde{M}^{\e}_1$, while $\x_0$ is on the boundary. This contradicts the asymptotics of the $b$-fields. Therefore, $(\x_j)$ has a convergent subsequence that converges to an interior point $\omega_2$ of $S\tilde{M}^{\e}_2$, which proves the lemma. 
\end{proof}

Now we assume that $\tilde{U} \cc U_{\delta}$ is chosen so that $\sB_i$ is an analytic embedding in a neighbourhood of $\omega_i$, for $i = 1,2$. We also have $S\tilde{U} \sub D_1$. 

\begin{lemma}\label{lemma tangent spaces equal}
    We have
    \begin{equation*}
        D \sB_1(\omega_1)(T_{\omega_1} S\tilde{M}_1^{\e}) = D \sB_2(\omega_2)(T_{\omega_2} S\tilde{M}_2^{\e}) \sub H^s(\tilde{U}, T\tilde{U}).
    \end{equation*}
\end{lemma}

\begin{proof}
    By Lemma \ref{lemma sequences}, there is a sequence $(\omega_j) \sub D_1$ with $\omega_j \to \omega_1$ and $\hat{\Phi}(\omega_j) \to \omega_2$. By definition of $D_1$, the maps $\sB_1$ and $\sB_2 \circ \hat{\Phi}$ coincide on $D_1$. Hence, so do their derivatives. Therefore,
    \begin{equation*}
        D \sB_1 (\omega_j)(T_{\omega_j} S\tilde{M}_1^{\e}) = D \sB_2 (\hat{\Phi}(\omega_j))(T_{\hat{\Phi}(\omega_j)} S \tilde{M}_2^{\e}).
    \end{equation*}
    By the choice of $s$, the derivatives $D\sB_i$ are continuous on $S \tilde{M}_i^{\e}$. The result follows. 
\end{proof}

Let us now set 

\begin{equation*}
    \sV := D \sB_1(\omega_1)(T_{\omega_1} S\tilde{M}_1^{\e}) = D \sB_2(\omega_2)(T_{\omega_2} S\tilde{M}_2^{\e}) \sub H^s(\tilde{U}, T\tilde{U}).
\end{equation*}

Since $D\sB_i$ is injective, we have that $\dim{\sV} \cong \dim{T_{\omega_i} S \tilde{M}_i^{\e}} = 5$. Let $P : H^s(\tilde{U}, T\tilde{U}) \to \sV$ be the orthogonal projection onto $\sV$. Then
\begin{equation*}
    D(P\sB_j)(\omega_j) = P(D\sB_j)(\omega_j) = D\sB_j(\omega_j),
\end{equation*}
which is bijective onto $\sV$. So by the inverse function theorem,
\begin{equation*}
    P\sB_i : \sN(\omega_i, S\tilde{M}_i^{\e}) \to \sN(Pu, \sV),
\end{equation*}
is a real-analytic diffeomorphism, where $\sN(p,W)$ denotes a neighbourhood of $p$ in $W$. Now, by decomposing $H^s(\tilde{U}, T\tilde{U}) = \Im{P} \oplus \Im(1-P)$, it is easy to see that the sets
\begin{equation*}
    \{ \sB_i(\omega) \ | \ \omega \in \sN(\omega_i, S \tilde{M}_i^{\e}) \}
\end{equation*}
are the graphs of the real-analytic functions 
\begin{equation}\label{(1-P)B(PB)-1 function}
    (1-P) \sB_i (P \sB_i)^{-1} : \sN(Pu, \sV) \to \Im(1-P).
\end{equation}
Moreover, using Lemma \ref{lemma sequences}, the fact that $\hat{\Phi}$ is a local isometry on $D_1$, and the real-analyticity of the functions \eqref{(1-P)B(PB)-1 function}, one easily establishes that
\begin{equation}\label{(1-P)B(PB)-1 equality}
        (1-P)\sB_1(P\sB_1)^{-1}(v) = (1-P)\sB_2(P\sB_2)^{-1}(v)
\end{equation}
holds for all $v \in \sN(Pu, \sV)$.

We are now ready to prove that $\omega_1$ is an interior point of $D_1$. We will do this by showing that for every $\tilde{\omega}_1 \in \sN(\omega_1, S \tilde{M}_1^{\e})$, there exists a unique $\tilde{\omega}_2 \in \sN(\omega_2, S \tilde{M}_2^{\e})$ such that
\begin{equation}\label{B_1 = B_2 in neighbourhood}
    \sB_1(\tilde{\omega}_1) = \sB_2(\tilde{\omega}_2).
\end{equation}
Indeed, note that we may take $\tilde{\omega_2} = (P \sB_2)^{-1} (P \sB_1)(\tilde{\omega}_1)$. Then $P\sB_1(\tilde{\omega}_1) = P\sB_2(\tilde{\omega}_2)$ holds by definition, while equation \eqref{(1-P)B(PB)-1 equality} implies
\begin{equation*}
    (1-P)\sB_1(\tilde{\omega}_1) = (1-P)\sB_2(\tilde{\omega}_2).
\end{equation*}
Therefore \eqref{B_1 = B_2 in neighbourhood} holds, and so the map $\hat{\Phi} = \sB_2^{-1}\sB_1$ is real-analytic in $\sN(\omega_1, S\tilde{M}_1^{\e})$. Hence, 
\begin{equation*}
    \hat{\Phi} : D_1 \cup \sN(\omega_1, S\tilde{M}_1^{\e}) \to S\tilde{M}_2^{\e}
\end{equation*}
is real-analytic. Since $\hat{\Phi}$ is a local isometry on $S\tilde{U}$, it follows by analytic continuation that it is a local isometry everywhere on $D_1 \cup \sN(\omega_1, S\tilde{M}_1^{\e})$. But this means that $\omega_1$ is an interior point of $D_1$, contradicting the definition of $\omega_1$. Therefore, we must indeed have that $SN(\epsilon_0) \cup D_1 = S \tilde{M}^{\e}_1$, as desired. In particular, as we can make $\epsilon_0$ arbitrarily small, it follows that $\sB_1(S\tilde{M}_1^{\e}) = \sB_2(S\tilde{M}_2^{\e})$. Therefore, we have a real-analytic bijection $\hat{\Phi} : S\tilde{M}_1^{\e} \to S\tilde{M}_2^{\e}$, which is a local isometry everywhere. Hence it is an isometry. This complete the proof of Theorem \ref{isometry of sphere bundles theorem}.
\end{proof}

Unlike the argument used in \cite{Krupchyk2011}, we still have to show that the the isometry $\hat{\Phi} : S\tilde{M}_1^{\e} \to S\tilde{M}_2^{\e}$ descends to an isometry of the underlying base manifolds. To this end, first note that $\hat{\Phi}$ is fiber-preserving on $\tilde{U}$. That is, on $\tilde{U}$, it takes the form
\begin{equation*}
    \hat{\Phi}(x,\omega) = \big( \phi(x), \tilde{\Phi}(x,\omega) \big),
\end{equation*}
where $\phi : \tilde{U} \to \tilde{U}$ is the real-analytic isometry that identifies $(\tilde{U}, g_1)$ and $(\tilde{U}, g_2)$. We have previously identified these two sets implicitly and taken $\phi$ to be the identity, but here we make the identifying map explicit. So, if $\pi_i : S \tilde{M}_i \to \tilde{M}_i$ are the canonical projections, then we have
\begin{equation*}
    (\pi_2 \circ \hat{\Phi})|_{\tilde{U}} = \phi \circ \pi_1.
\end{equation*}
The real-analyticity of $\hat{\Phi}$ means that $\pi_2 \circ \hat{\Phi}$ will be constant along fibres everywhere on $S \tilde{M}_2^{\e}$, and therefore there is a well-defined real-analytic map $\Phi : \tilde{M}_1^{\e} \to \tilde{M}_2^{\e}$ such that
\begin{equation*}
    \pi_2 \circ \hat{\Phi} = \Phi \circ \pi_1
\end{equation*}
everywhere on $\tilde{M}_1^{\e}$, with $\Phi|_{\tilde{U}} = \phi$. That $\Phi$ is real-analytic, bijective and an isometry immediately follows from the corresponding properties of $\hat{\Phi}$. This proves Theorem \ref{main reconstruction theorem}. \\

\begin{remark}\label{remark: no local theorem}
    We can now appreciate why working with the Beltrami field equation poses a difficulty in extending the above proof of Theorem \ref{main reconstruction theorem} to the case of local data. In fact, the difficulty seems to lie entirely with the proof of the important but technical Lemma \ref{lemma sequences}. In the case of local data for the Hodge Laplacian, the analogue result \cite[Lemma 3.8]{Krupchyk2011} is proven using the vanishing of the Green's form on the boundary of the extended manifold, in conjunction with analytic continuation. The boundary values of the $b$-fields, however, do not appear in our proof of Theorem \ref{main reconstruction theorem}. This is for two main reasons: first, because the $b$-fields are generated by interior loops of radius less than $\e$ as opposed to point singularities, we do not reconstruct the initial extended manifold $\tilde{M}_i$ on which the $b$-fields are defined, but rather the slightly smaller manifold $\tilde{M}_i^{\e}$. Moreover, in the case of local data, it is the point $\x_0$ appearing in Equation \eqref{lemma sequences important equation} that lies on the boundary of $S \tilde{M}^{\e}_{2}$, rather than the evaluation point $y$; we have no way, therefore, of incorporating information about the boundary data of the $b$-fields into the proof. Indeed, our proof of Lemma \ref{lemma sequences} relies instead on the $b$-fields agreeing {\em everywhere} in a neighbourhood of the boundary of $\tilde{M}^{\e}_i$.
\end{remark}

\section*{Acknowledgements}

This work has received funding from the European Research Council (ERC) under the European Union's Horizon 2020 research and innovation programme through the grant agreement~862342. The authors are also partially supported by the grants CEX2023-001347-S, RED2022-134301-T and PID2022-136795NB-I00.



\end{document}